\titleformat{\subsection}[runin]
{\bfseries} {\thesubsection{.}}{0.15cm}{}[.]
\titleformat{\subsubsection}[runin]
{\em}{\thesubsubsection{.}}{0.15cm}{}[.]
\newtheorem{theorem}{Theorem}[section]
\newtheorem{proposition}[theorem]{Proposition}
\newtheorem{lemma}[theorem]{Lemma}
\theoremstyle{definition}
\newtheorem{definition}[theorem]{Definition}
\newtheorem{remark}[theorem]{Remark}
\newtheorem{problem}[theorem]{Problem}
\numberwithin{equation}{section}
\numberwithin{figure}{section}
\newcommand\Ascr{\mathscr{A}}
\newcommand\Cscr{\mathscr{C}}
\newcommand\Oscr{\mathscr{O}}
\newcommand\C{\mathbb{C}}
\newcommand\CP{\mathbb{CP}}
\newcommand\R{\mathbb{R}}
\newcommand\igot{\mathfrak{i}}
\renewcommand\igot{\mathfrak{i}}
\renewcommand\imath{\igot}
\newcommand\hra{\hookrightarrow}
\newcommand\wt{\widetilde}
\newcommand\wh{\widehat}
\numberwithin{equation}{section}
\begin{document}

\fancyhead[LO]{Stein neighbourhoods of bordered complex curves}
\fancyhead[RE]{F.\ Forstneri{\v c}} 
\fancyhead[RO,LE]{\thepage}

\thispagestyle{empty}

\vspace*{1cm}
\begin{center}
{\bf\LARGE Stein neighbourhoods of bordered complex curves \\ 
attached to holomorphically convex sets}

\vspace*{0.5cm}

{\large\bf  Franc Forstneri{\v c}} 
\end{center}

\vspace*{1cm}

\begin{quote}
{\small
\noindent {\bf Abstract}\hspace*{0.1cm}
In this paper, we construct open Stein neighbourhoods of compact sets of the form 
$A\cup K$ in a complex space, where $K$ is a compact holomorphically convex set,
$A$ is a compact complex curve with boundary $bA$ of class $\Cscr^2$ which may 
intersect $K$, and the set $A\cap K$ is $\Oscr(A)$-convex. 
\vspace*{0.2cm}

\noindent{\bf Keywords}\hspace*{0.1cm} Stein manifold, complex curve, holomorphically convex set

\vspace*{0.1cm}

\noindent{\bf MSC (2020):}\hspace*{0.1cm}} 
32E10; 32E20; 32U05   

\vspace*{0.1cm}
\noindent{\bf Date: \rm April 20, 2021. Final version: Oct. 6, 2022}

%
%
%
%
\end{quote}

%
%
%
%
\section{Introduction}\label{sec:intro}

An important problem in complex analysis is to understand which sets in a complex 
manifold or a complex space admit a basis of open Stein neighbourhoods.
This problem is of both theoretical and practical importance 
due to the abundance of analytic techniques available on Stein manifolds.
A seminal result of Siu \cite{Siu1976} from 1976 is that a locally closed Stein subspace $A$ of an
arbitrary complex space $X$ admits a basis of open Stein neighbourhoods in $X$. 
(See also Suzuki \cite[Lemma 3, p.\ 59]{Suzuki1978} for a special case, and 
Col{\c{t}}oiu \cite{Coltoiu1990} and Demailly \cite{Demailly1990} for simpler proofs and 
generalizations to $q$-convex subspaces.) Another exposition can be found in 
\cite[Sections 3.1--3.2]{Forstneric2017E} where it was shown in addition that if 
$A\subset X$ are as above and $K$ is a {\em compact holomorphically convex set} in $X$ 
(i.e., $K$ is $\Oscr(\Omega)$-convex in an open Stein neighbourhood $\Omega\subset X$ of $K$; 
see Definition \ref{def:holoconvex}) such that $A\cap K$ is a compact 
$\Oscr(A)$-convex set (see Sect.\ \ref{sec:prelim}), then $A\cup K$ admits a basis of open 
Stein neighbourhoods; see \cite[Theorem 3.2.1]{Forstneric2017E} or \cite[Theorem 1.2]{Forstneric2005AIF}. 

The problem of finding Stein neighbourhoods of Stein subvarieties with boundaries 
is more subtle. In this paper we prove the following result in this direction.

%
%
\begin{theorem} \label{th:AK}
Assume that $X$ is a complex space and $A$ is a compact complex curve in $X$ 
with embedded $\Cscr^2$ boundary and having no irreducible components without boundary.
If $K$ is a compact holomorphically convex set in $X$ such that $A\cap K$ is $\Oscr(A)$-convex, 
then $A\cup K$ has a basis of open Stein neighbourhoods (i.e., it is a Stein compact).
\end{theorem}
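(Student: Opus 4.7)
The strategy is to reduce the theorem to the boundaryless case, namely Theorem~3.2.1 of the monograph cited in the introduction, by extending $A$ slightly across its boundary $bA$ to a complex curve $\tilde A$ with $A\Subset\tilde A$ that is a closed complex subvariety of an open neighbourhood $U\subset X$ of $A$. Once such an extension is constructed, applying the boundaryless result in $X$ to the locally closed Stein subvariety $\tilde A$ and the compact holomorphically convex set $K$ will yield a Stein neighbourhood basis of $\tilde A\cup K$, hence of $A\cup K\subset\tilde A\cup K$, provided $\tilde A$ can be chosen arbitrarily close to $A$.

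The principal step is the construction of $\tilde A$. I would prove the following: for every open neighbourhood $W\subset X$ of $A$ there exists a complex curve $\tilde A$ (without boundary, as a Riemann surface) with $A\Subset\tilde A\subset W$ that is a closed analytic subset of some open set $U\subset W$. The construction is local around $bA$: because $bA$ is an embedded $\Cscr^2$ submanifold of $X$ and $A$ is a complex curve whose boundary is $bA$, each point $p\in bA$ has a neighbourhood in $X$ in which $A$ is a smoothly bordered domain in an embedded analytic disc $D_p\subset X$ through $p$, and the full disc $D_p$ itself provides the local extension of $A$ across $p$. Patching these local discs using compactness of $bA$ produces the global $\tilde A$ as an analytic subset of an open neighbourhood $U\subset W$ of $A$. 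The hypothesis that no irreducible component of $A$ is boundaryless ensures that every connected component of $\tilde A$ is a non-compact Riemann surface (possibly with isolated singular points), and hence is automatically Stein. I expect this extension step to be the main technical obstacle, because it requires careful local holomorphic extensions in a possibly singular ambient space $X$ together with a patching argument compatible with the analytic structure.

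Given $\tilde A$, one then verifies that $\tilde A\cap K$ is $\Oscr(\tilde A)$-convex for a suitable choice of the extension. On a non-compact Riemann surface, holomorphic convexity of a compact set is equivalent to no connected component of its complement being relatively compact; the $\Oscr(A)$-convexity of $A\cap K$ says that every component of $A\setminus(A\cap K)$ reaches $bA$, and by extending $\tilde A$ far enough outward from $bA$ in each component this property transfers to $\tilde A\cap K$ in $\tilde A$. The boundaryless theorem then produces, inside any preassigned open neighbourhood $W$ of $A\cup K$ in $X$, a Stein neighbourhood of $\tilde A\cup K$ once $\tilde A$ has been chosen close enough to $A$ so that $\tilde A\cup K\subset W$. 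Any such neighbourhood is a Stein neighbourhood of $A\cup K$ contained in $W$, so varying $W$ gives the required basis and completes the proof.
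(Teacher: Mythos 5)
Your reduction hinges on the claim that every boundary point $p\in bA$ admits a local analytic extension: that near $p$ the curve $A$ sits as a bordered domain inside an embedded analytic disc $D_p\subset X$. This is false for a complex curve with merely $\Cscr^2$ (smooth but non-analytic) boundary, and it is precisely the obstruction that makes the bordered case hard. The definition only requires that in a local embedding $\phi:U\hra\C^N$ the image $\phi(A\cap U)$ be a one-dimensional complex submanifold with $\Cscr^2$ boundary; it does not provide any analytic continuation of $A$ across $bA$. Concretely, take $f$ holomorphic on the unit disc $\D$, of class $\Cscr^2$ on $\overline\D$ (e.g.\ a lacunary series with rapidly decreasing coefficients), whose natural boundary is the unit circle, and let $A=\{(z,f(z)):z\in\overline\D\}\subset\C^2$. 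This is a compact embedded complex curve with embedded $\Cscr^2$ boundary, yet no point of $bA$ admits a local analytic disc $D_p$ extending $A$, since that would force $f$ to continue holomorphically past some boundary point. So the curve $\tilde A\supset\supset A$ you want to patch together does not exist in general, and the reduction to the boundaryless theorem (Siu-type Stein neighbourhoods of $\tilde A\cup K$) collapses at its first step. Note also that the remark in Section~\ref{sec:prelim} that the normalization of $A$ is a domain with real-analytic boundary in a compact Riemann surface is an intrinsic statement about the abstract Riemann surface; it does not extend the embedding $A\hra X$ beyond $bA$, so it cannot be used to manufacture $\tilde A$ either.

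There is a secondary weakness even where your scheme could formally proceed: since the theorem allows $bA\cap K\ne\varnothing$ (the case $bA\cap K=\varnothing$ was already known), any outward extension near $bA\cap K$ changes the intersection with $K$, and it is not clear that $\tilde A\cap K$ stays compact with the right convexity, nor that it can be controlled by ``extending far enough outward''. The paper avoids extension across $bA$ altogether: it enlarges $(A,K)$ to a special admissible pair, uses Stolzenberg's theorem on hulls of unions of curves with holomorphically convex sets, and then welds plurisubharmonic defining functions of pieces of $A\cup K$ (a collar around $A\cap bK$ plus successively attached discs), invoking the characterization of holomorphically convex compacts by a strongly plurisubharmonic function together with a nonnegative plurisubharmonic function vanishing exactly on the set. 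If you want to salvage your approach, you would have to add the hypothesis that $A$ extends analytically past $bA$, which is a strictly stronger assumption than the $\Cscr^2$ regularity in the statement (compare the discussion of Problem~\ref{prob:Euclidean} in the introduction, where exactly this issue is flagged).
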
 

Under the additional assumption that $bA\cap K=\varnothing$, this was proved in my 
joint work with Drinovec Drnov\v sek in 2007; see \cite[Theorem 2.1]{DrinovecForstneric2007DMJ}. 
(The special case with $K=\varnothing$ and $X=\CP^n$ is due to Mihalache \cite{Mihalache1996} in 1996.) 
Here, this condition is removed by an entirely new proof. 
Simple examples show that $\Oscr(A)$-convexity of $A\cap K$ is a necessary condition
in Theorem \ref{th:AK}; see \cite[Remark 3.2.2]{Forstneric2017E}. Hence, Theorem \ref{th:AK} is optimal, 
except perhaps in terms of the boundary regularity of the complex curve $A$. 
Piecewise $\Cscr^2$ boundary is fine since the corner points can be added 
to $K$, and $\Cscr^1$ boundary would be ideal. 

We point out that Theorem \ref{th:AK} fails in general if $A$ is a subvariety  
of higher dimension $\dim A>1$ whose boundary intersects $K$. The reason is that 
a nontrivial envelope of holomorphy of $A\cup K$ may appear along $bA\cap bK$, at least if $K$ 
has nonempty interior. For example, let $A$ be a closed ball in the complex $2$-plane 
$\Sigma=\C^2\times \{0\}\subset \C^3$, and let $K$ be a closed ball in $\C^3$ centred 
at $0$. If $A$ is not contained in $K$ but $bA$ intersects 
the interior of $K$, then $(A\cup K)\cap\Sigma$ is a union of two compact strongly 
pseudoconvex domains with a nontrivial envelope of holomorphy 
along $bA\cap bK$, and hence $A\cup K$ does not have a basis of Stein neighbourhoods. 
On the other hand, it was shown by Star\v ci\v c \cite[Theorem 1]{Starcic2008} that the 
analogue of Theorem \ref{th:AK} holds if $A$ is a compact complex subvariety 
with Stein interior whose embedded strongly pseudoconvex boundary $bA$ of class $\Cscr^2$ 
does not intersect $K$. 

My initial attempt to prove Theorem \ref{th:AK} by adapting the techniques used in 
\cite[proof of Theorem 2.1]{DrinovecForstneric2007DMJ} was not successful,
so I developed a new approach. An important ingredient in the present proof 
is the seminal theorem of Stolzenberg \cite{Stolzenberg1966AM} from 1966,  
concerning the polynomial hull of the union of a compact polynomially convex set and 
a finite number of $\Cscr^1$ curves in a Euclidean space $\C^N$.
Stolzenberg's result easily adapts to Stein spaces via the embedding theorem;
see Theorem \ref{th:Stolzenberg}. We first show that every set of the form $A\cup K$ as
in Theorem \ref{th:AK}, which we call {\em admisible}, 
can be slightly enlarged to a more regular {\em special admissible set}; 
see Lemma \ref{lem:enlarging}. By \cite[Lemma 2.4]{DrinovecForstneric2007DMJ} 
there exists a strongly plurisubharmonic function in a neighbourhood of $A\cup K$ in $X$.
The main work is to show that any special admissible set is holomorphically convex, 
hence a Stein compact (see Theorem \ref{th:AKS}); this proves Theorem \ref{th:AK}. 
The proof of Theorem \ref{th:AKS}, given in Section \ref{sec:proof}, is based on a 
couple of lemmas, the second of which amounts to welding pairs of 
holomorphically convex special admissible sets under a suitable geometric condition on their intersection.
(This intersection should be a union of pairwise disjoint complex discs and 
should not contain any complex annuli.) Theorem \ref{th:AKS} is obtained by a finite inductive application 
of this welding technique. A version of this technique was previously used by Poletsky \cite{Poletsky2013} 
to construct Stein neighbourhoods of certain graphs.

In this regard I wish to mention the following problem. 

\begin{problem}\label{prob:Euclidean}
Assume that $X$ is a complex manifold of dimension $n\ge 3$ and $K$ is a compact set in $X$
which admits an open neighbourhood $\Omega$ and a biholomorphic map 
$\Phi:\Omega\to \Phi(\Omega)\subset \C^n$ such that  $\Phi(K)\subset \C^n$ 
is polynomially convex. Let $A$ be a smooth complex curve as in Theorem \ref{th:AK}
such that $A\cap K$ is $\Oscr(A)$-convex. 
Does $A\cup K$ admit a Euclidean neighbourhood in $X$?
\end{problem}

It has recently been shown in \cite{Forstneric2022JMAA} that if $A$ is a locally closed 
smooth complex curve without boundary in a complex manifold $X$ of dimension $\ge 3$ 
such that $A\cap K$ is compact and $\Oscr(A)$-convex,
then, with $K$ as above, the union $A\cup K$ admits a basis of Euclidean Stein neighbourhoods. 
This is a special case of \cite[Theorem 1.1]{Forstneric2022JMAA}. 
However, it is often desirable to have a holomorphic coordinate system around the entire
object under consideration, including its boundary. 
At this time I am unable to answer Problem \ref{prob:Euclidean} due to technical problems in  
adapting the methods from \cite{Forstneric2022JMAA}. The only nontrivial problem seems to be
that the gluing lemma in the cited paper  (see \cite[Theorem 3.7]{Forstneric2022JMAA}) 
does not apply when the curve $A$ is compact with smooth but non-analytic boundary. 
If $A$ is analytic past the boundary along a suitable region in $A$ along which we wish to glue,
then \cite[Theorem 3.7]{Forstneric2022JMAA} applies and we obtain a Euclidean neighbourhood 
of $A\cup K$. (See \cite[Theorem 1.1]{Forstneric2022JMAA}. The assumption $n=\dim X\ge 3$
is needed to ensure that the same curve $A$ also embeds holomorphically into $\C^n$.)
We leave a formal statement and proof to an eventual application.

%
%
\section{Preliminaries}\label{sec:prelim}

In this section we recall some notions and results from the theory of 
complex spaces and holomorphic convexity, and we prepare the auxiliary results which will be used.
For the general theory of complex spaces, see \cite{GrauertRemmert1958,GrauertPeternellRemmert1994};
for Stein spaces, see \cite{GrauertRemmert1979,Hormander1990}.

Let $X$ be a complex space. We denote by $\Oscr(X)$ the algebra of holomorphic functions on $X$.
By definition, every point $p\in X$ admits an open neighbourhood 
$U\subset X$ and a holomorphic embedding $\phi : U\hra U'\subset \C^N$ onto a closed complex
subvariety $\phi(U)$ of a domain $U'$ in a complex Euclidean space $\C^N$.
By shrinking $U$ around $p$ and taking $N$ the smallest possible (this number is called
the local embedding dimension of $X$ at $p$), any two such local holomorphic embeddings
are related by a biholomorphism between a pair of their open neighbourhood in $\C^N$. 
This makes it possible to introduce differential calculus and the notion of smooth  
functions and related objects on complex spaces. A function $f$ on $X$ 
is said to be of class $\Cscr^r$ if it is locally near any point $p\in X$ of the form $f=g\circ \phi$, where 
$\phi:U\hra U'\subset\C^N$ is a local holomorphic embedding as above and $g$ is a $\Cscr^r$ function 
on the domain $U'$. The function $f$ is {\em (strongly) plurisubharmonic} if the local extension $g$ as above
can be chosen (strongly) plurisubharmonic on the ambient domain.  
A locally closed subset $M$ of a complex space $X$ is said to be
a {\em submanifold of class $\Cscr^r$} if every point $p\in M$ admits an open neighbourhood 
$U\subset X$ and a holomorphic embedding $\phi : U\hra U'\subset \C^N$ such that 
$\phi(M\cap U)$ is a locally closed submanifold of class $\Cscr^r$ in $\C^N$. 

%
%
Given a compact set $K$ in a complex space $X$, its {\em $\Oscr(X)$-convex hull} is the set
\[
	\wh K_{\Oscr(X)} = \bigl\{x\in X: |f(x)| \le \sup_K|f|\ \ \text{for all}\ f\in\Oscr(X)\bigr\}. 
\]
The set $K$ is called {\em $\Oscr(X)$-convex} if $K=\wh K_{\Oscr(X)}$.
If $X$ is a {\em Stein space} then the hull $\wh K_{\Oscr(X)}$ of any compact set $K$ 
is compact. In fact, this is one of two axioms characterizing Stein spaces, the other one 
being separation of points by holomorphic functions. 

According to Narasimhan \cite{Narasimhan1961} and Forn\ae ss and Narasimhan \cite{FornaessNarasimhan1980},
a complex space is Stein if and only if it admits a strongly plurisubharmonic exhaustion function. 
For manifolds, this is due to Grauert \cite[Theorem 2]{Grauert1958AM}; see also 
H\"ormander \cite[Theorem 5.2.10]{Hormander1990}. 
On a Stein space, the holomorphic hull $\wh K_{\Oscr(X)}$ equals the hull of $K$ 
with respect to the family of all (continuous or smooth) plurisubharmonic functions on $X$. 
By using the easy part of this result that the $\Oscr(X)$-convex hull contains the plurisubharmonic hull, 
we infer that if $K$ is a compact $\Oscr(X)$-convex set in a Stein space $X$ 
and $V\subset X$ is a neighbourhood of $K$, there are a smooth plurisubharmonic 
exhaustion function $\rho_0:X\to \R$ such that $\rho_0<0$ on $K$ and $\rho_0>0$ on 
$X\setminus V$ (see \cite[Theorem 5.1.6]{Hormander1990} and note that the proof given there also
applies to Stein spaces) and a smooth plurisubharmonic exhaustion function $\rho:X\to \R_+$  
such that $K=\{\rho=0\}$ and $\rho$ is strongly plurisubharmonic on $X\setminus K=\{\rho>0\}$ 
(see \cite[Lemma, p.\ 430]{Rosay2006} or \cite[Proposition 2.5.1]{Forstneric2017E}).
Furthermore, given an $\Oscr(X)$-convex set $K$ and a plurisubharmonic function 
$\rho$ in a neighbourhood $U$ of $K$, there is an exhausting plurisubharmonic function $\tilde \rho$ on $X$ 
which agrees with $\rho$ near $K$ and is strongly plurisubharmonic
near infinity. To see this, pick a neighborhoood $V\Subset U$ of $K$, a strongly
plurisubharmonic exhaustion function $\rho_0:X\to\R$ such that $\rho_0<0$ on $K$
and $\rho_0>0$ on $X\setminus V$, and a number $c_0>0$ such that $c_0\rho_0>\rho$ on $bV$ 
and $c_0\rho_0<\rho$ on $K$, and define $\tilde \rho=\max\{c_0\rho_0,\rho\}$
on $V$ and $\tilde \rho=c_0\rho$ on $X\setminus V$. By using the regularized maximum 
(see Demailly \cite{Demailly1990} or \cite[p.\ 69]{Forstneric2017E}) we can make $\tilde \rho$ smooth.

%
%

We shall need the following local version of the notion of holomorphic convexity.

\begin{definition}\label{def:holoconvex} 
A compact set $K$ in a complex space $X$ is {\em holomorphically convex} if there is an open 
Stein neighbourhood $\Omega\subset X$ of $K$ such that $K=\wh K_{\Oscr(\Omega)}$ is 
$\Oscr(\Omega)$-convex.
\end{definition}

Clearly, every compact holomorphically convex  set is a {\em Stein compact}, i.e., it has 
a basis of open Stein neighbourhoods. Although the converse is not true in general, 
a Stein compact $K$ can be approximated from the outside by compact holomorphically 
convex sets obtained by taking the hulls of $K$ in open Stein neighbourhoods of $K$.

We shall use the following characterization of holomorphically convex sets.
(This is a folklore result; a specific reference is \cite{Rosay2006}, but likely not the first one.)

%
%
\begin{proposition}\label{prop:hc}
Assume that $K$ is a compact set in a complex space $X$ such that 
\begin{enumerate}[\rm (a)]
\item there is a strongly plurisubharmonic function $\rho$ in a neighborhood $K$, and 
\item there is a nonnegative plurisubharmonic function $\tau\ge 0$ in a neighborhood of $K$ 
which vanishes precisely on $K$.
\end{enumerate}
Then, $K$ is holomorphically convex.
\end{proposition}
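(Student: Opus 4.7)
The plan is to build an explicit open Stein neighbourhood $\Omega$ of $K$ from the data $(\rho,\tau)$ and then verify directly from Definition~\ref{def:holoconvex} that $K=\wh K_{\Oscr(\Omega)}$. Let $U$ denote a common open neighbourhood of $K$ on which both $\rho$ (strongly plurisubharmonic) and $\tau$ (plurisubharmonic with $\tau\geq 0$ and $\tau^{-1}(0)=K$) are defined.

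First I would localize. Choose an open $V$ with $K\subset V\Subset U$; because $\tau$ is continuous and strictly positive on the compact set $bV$, there exists $m>0$ with $\tau\geq m$ on $bV$. Fix any $\delta$ with $0<\delta<m$ and set
\[
\Omega=\{x\in V:\tau(x)<\delta\}.
\]
No point of $\overline{\Omega}$ can lie on $bV$ (continuity would force $\tau\leq\delta<m$ there), so $\overline{\Omega}\subset V\subset U$, while $K\subset\Omega$ because $\tau\equiv 0$ on $K$. Thus $\Omega$ is a relatively compact open neighbourhood of $K$ on which both $\rho$ and $\tau$ are defined, with $b\Omega\subset\{\tau=\delta\}$.

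Next I would exhibit a strongly plurisubharmonic exhaustion of $\Omega$. Since $t\mapsto -\log(\delta-t)$ is convex and increasing on $(-\infty,\delta)$, the function $-\log(\delta-\tau)$ is plurisubharmonic on $\Omega$; adding a multiple of $\rho$ gives
\[
\phi=-\log(\delta-\tau)+c\rho\quad(c>0),
\]
which is strongly plurisubharmonic on $\Omega$ thanks to the $c\rho$ term. As $x$ approaches $b\Omega$ one has $\tau(x)\to\delta$ while $\rho$ stays bounded on the compact set $\overline{\Omega}$, so $\phi(x)\to+\infty$; hence $\phi$ is a strongly plurisubharmonic exhaustion of $\Omega$. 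By the characterization of Stein spaces recalled in Section~\ref{sec:prelim}, $\Omega$ is Stein.

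Finally I would check holomorphic convexity. Because $\Omega$ is Stein, the $\Oscr(\Omega)$-convex hull of $K$ equals the hull of $K$ with respect to plurisubharmonic functions on $\Omega$ (Section~\ref{sec:prelim}). The function $\tau$ is plurisubharmonic and nonnegative on $\Omega$ and satisfies $\sup_K\tau=0$, so for every $x\in\wh K_{\Oscr(\Omega)}$ we obtain $0\leq\tau(x)\leq 0$, forcing $\tau(x)=0$ and hence $x\in K$. This yields $K=\wh K_{\Oscr(\Omega)}$, which is precisely Definition~\ref{def:holoconvex}. I do not expect a genuine obstacle; the only point requiring care is the initial cut-off---choosing $V\Subset U$ and then the sublevel $\{\tau<\delta\}$ inside $V$---so that $\overline{\Omega}$ stays away from the ambient boundary of $U$ where $\rho$ and $\tau$ are no longer controlled.
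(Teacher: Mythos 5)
Your proof is correct and follows essentially the same route as the paper: take a sublevel set of $\tau$ as the Stein neighbourhood, make it Stein via a strongly plurisubharmonic exhaustion obtained by adding $\rho$ to a convex increasing reparametrization of $\tau$ that blows up at the boundary (you use $-\log(\delta-\tau)$ where the paper uses $\tfrac{1}{\epsilon-\tau}$), and then use $\tau$ together with the equality of holomorphic and plurisubharmonic hulls on Stein spaces to conclude $K=\wh K_{\Oscr(\Omega)}$. The only difference is that you spell out the localization $V\Subset U$ and the choice $\delta<\min_{bV}\tau$, which the paper compresses into the phrase ``for every small enough $\epsilon>0$.''
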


\begin{proof}
For $\epsilon>0$ the function $t\mapsto \frac{1}{\epsilon - t}$ is strongly increasing and 
convex on $\{t<\epsilon\}$. Hence, for every small enough $\epsilon>0$ the function 
$\frac{1}{\epsilon - \tau}$ is a plurisubharmonic exhaustion function on the domain
$\Omega_\epsilon = \{\tau <\epsilon\}$, which is contained in the domain of $\rho$, 
and hence $\rho+\frac{1}{\epsilon - \tau}$ is a strongly 
plurisubharmonic exhaustion function on $\Omega_\epsilon$. 
Thus, $\Omega_\epsilon$ is a Stein neighborhood of $K$, and 
$K$ is $\Oscr(\Omega_\epsilon)$-convex as shown by the function $\tau$.
\end{proof}

%
%
A compact set $A$ in a complex space $X$ is said to be a {\em complex curve with $\Cscr^r$ 
boundary $bA$} for some $r\in \{1,2,\ldots,\infty\}$ if the following two conditions hold:
\begin{enumerate}[\rm (i)] 
\item $A\setminus bA$ is a closed purely one-dimensional complex analytic subvariety 
of $X\setminus bA$ without compact irreducible components, and
\item every point $p\in bA$ admits an open neighbourhood $U\subset X$
and a holomorphic embedding $\phi : U\hra \Omega\subset \C^N$
such that $\phi(A\cap U)$ is a one dimensional complex submanifold of $\Omega$ with 
$\Cscr^r$ boundary $\phi(bA \cap U)$. In particular, $\phi(bA \cap U)$ is an embedded 
curve of class $\Cscr^r$ in $\C^N$.
\end{enumerate}

The same notion of a complex curve with boundary was used in \cite{DrinovecForstneric2007DMJ}.  
Note that the boundary $bA$ consists of finitely many closed Jordan curves 
of class $\Cscr^r$, and $A$ has at most finitely many singular points in the interior
$\mathring A=A\setminus bA$. The normalization of $A$ is a compact bordered Riemann surface
with $\Cscr^r$ boundary. Every such surface can be realized as a compact domain with real analytic
boundary in a compact Riemann surface by a conformal diffeomorphism which is of 
H\"older class $\Cscr^{r-1,\alpha}$ up to the boundary for any $0<\alpha<1$ 
(see Stout \cite[Theorem 8.1]{Stout1965TAMS} and \cite[Theorem 1.10.10]{AlarconForstnericLopez2021} 
for the discussion and references). Thus, $A$ is a domain with real analytic boundary in a compact 
one-dimensional complex space with finitely many singular points, none of which lie on $bA$.

A compact set $L$ in such a complex curve $A$ is called $\Oscr(A)$-convex if for every point 
$p\in A\setminus L$ there is a function $f\in\Ascr(A)$ (i.e., continuous on $A$ and holomorphic on 
$\mathring A=A\setminus bA$) such that $|f(p)|>\max_L |f|$. 
Equivalently, we can use holomorphic functions on a neighbourhood of $A$ in an ambient one-dimensional 
complex space, or continuous subharmonic functions on $A$. 
By classical function theory, a compact set $L$ in $A$ is 
$\Oscr(A)$-convex if and only if every connected component of $A\setminus L$ intersects $bA$. 
This is an analogue of the Runge condition in open Riemann surfaces, where a compact set 
is holomorphically convex if and only if its complement has no relatively compact connected components. 

We shall be concerned with sets of the form $A\cup K$ having the properties in Theorem \ref{th:AK}.
It will be convenient to have a name for them.

%
%
\begin{definition}\label{def:admissible}
A pair $(A,K)$ of compact sets in a complex space $X$ is {\em admissible} if
\begin{enumerate}[\rm (i)]
\item $A$ is a compact complex curve in $X$ with embedded $\Cscr^2$ boundary 
having no irreducible components without boundary,
\item $K$ is a compact holomorphically convex set (see Definition \ref{def:holoconvex}), and
\item the set $A\cap K$ is $\Oscr(A)$-convex.
\end{enumerate}
An admissible pair $(A,K)$ is {\em special} if in addition $A$ is nonsingular
(a union of finitely many pairwise disjoint embedded complex curves with $\Cscr^2$ boundaries)
and $K$ is a strongly pseudoconvex domain with smooth boundary $bK$ intersecting both 
$A$ and $bA$ transversely. 

If $(A,K)$ is an admissible pair then $A\cup K$ is called an {\em admissible set}.
\end{definition}

Smoothness of $bK$ is meant as explained above, working in an ambient space
via local holomorphic embeddings. (Alternatively, we can use a global holomorphic embedding of 
a Stein neighbourhood of $K$ into $\C^N$.) 
Note that if $(A,K)$ is a special admissible pair, then $A\cap bK$ is a union of finitely many 
pairwise disjoint $\Cscr^2$ arcs connecting pairs of points in $bA$ and intersecting $bA$
transversely at the endpoints, and closed curves contained in $A\setminus bA$.
In fact, these are the relevant properties that will be used in the proof.

%
%
\begin{lemma}\label{lem:reduction}
Let $(A,K)$ be an admissible pair in a complex space $X$.
\begin{enumerate}[\rm 1.]
\item  There exists a strongly plurisubharmonic function in a neighbourhood of $A\cup K$ in $X$.
\item $A\cup K$ is holomorphically convex if and only if there exists a nonnegative 
plurisubharmonic function $\tau \ge 0$ in an open neighbourhood of $A\cup K$ 
such that $\{\rho=0\}=A\cup K$. 
\end{enumerate}
\end{lemma}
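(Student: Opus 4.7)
The plan is to handle the two parts separately: Part 1 is essentially \cite[Lemma 2.4]{DrinovecForstneric2007DMJ}, already cited in the introduction, while Part 2 follows from Proposition \ref{prop:hc} combined with the standard construction of nonnegative plurisubharmonic exhaustions from $\Oscr$-convexity that was recalled in Section \ref{sec:prelim}.

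For Part 1, I would construct strongly plurisubharmonic functions on open Stein neighbourhoods of $A$ and of $K$ separately and then glue them by a regularized maximum. Since $K$ is holomorphically convex, Definition \ref{def:holoconvex} together with the discussion in Section \ref{sec:prelim} yields a Stein neighbourhood $\Omega_K$ of $K$ carrying a strongly plurisubharmonic function $\rho_K$. For the curve $A$ with embedded $\Cscr^2$ boundary, the special case $K=\varnothing$ of Theorem \ref{th:AK} (due to Mihalache \cite{Mihalache1996} for $X=\CP^N$ and to Drinovec Drnov\v sek and Forstneri\v c \cite{DrinovecForstneric2007DMJ} in general) supplies a Stein neighbourhood $\Omega_A$ of $A$, and hence a strongly plurisubharmonic function $\rho_A$ on $\Omega_A$. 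After rescaling $\rho_K$ and $\rho_A$ by suitable positive constants so that the two functions are comparable on a thin shell around $A\cap K$, the regularized maximum of $\rho_K$ and $\rho_A$, formed on a small neighbourhood of $A\cup K$ inside $\Omega_A\cup\Omega_K$, furnishes the required strongly plurisubharmonic function. The $\Oscr(A)$-convexity of $A\cap K$, equivalent to the condition that every connected component of $A\setminus(A\cap K)$ meets $bA$, is used to arrange the transition between $\rho_K$ and $\rho_A$ along $A$ without obstructions.

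For Part 2 (reading the right-hand condition as $\{\tau=0\}=A\cup K$, which is the obvious intent), the implication $(\Leftarrow)$ is immediate from Proposition \ref{prop:hc} applied to the compact set $A\cup K$: Part 1 supplies hypothesis (a), while the given $\tau$ supplies hypothesis (b), and the proposition concludes. For $(\Rightarrow)$, suppose $A\cup K$ is holomorphically convex; then Definition \ref{def:holoconvex} provides an open Stein neighbourhood $\Omega\subset X$ in which $A\cup K$ is $\Oscr(\Omega)$-convex, and the construction recalled in Section \ref{sec:prelim} (see \cite[Lemma, p.\ 430]{Rosay2006} or \cite[Proposition 2.5.1]{Forstneric2017E}) produces a smooth plurisubharmonic exhaustion function $\tau:\Omega\to\R_+$ with $\{\tau=0\}=A\cup K$ and $\tau$ strongly plurisubharmonic on $\Omega\setminus(A\cup K)$. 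Restricting $\tau$ to any open neighbourhood of $A\cup K$ is then the desired function.

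The only substantive work lies in Part 1, which is handled here by citation. Were one to reproduce the argument in detail, the delicate point would be ensuring strict positivity of the Levi form of the glued function across the transition region near $A\cap K$: $\rho_K$ is strongly plurisubharmonic in all directions throughout $\Omega_K$, whereas $\rho_A$ is pertinent essentially near $A$, so the matching must be carefully controlled. It is precisely the $\Oscr(A)$-convexity of $A\cap K$ that supplies a plurisubharmonic barrier on $A$ vanishing on $A\cap K$ and positive elsewhere on $A$, whose lift to a neighbourhood in $X$ allows the passage from $\rho_A$ to $\rho_K$ without cancellation in the regularized maximum.
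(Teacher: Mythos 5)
Your Part 2 is correct and is exactly the paper's route: the direction $(\Leftarrow)$ is Proposition \ref{prop:hc} combined with part 1, the direction $(\Rightarrow)$ is the standard construction recalled in Section \ref{sec:prelim} (Rosay's lemma) applied in a Stein neighbourhood $\Omega$ in which $A\cup K$ is $\Oscr(\Omega)$-convex, and you are right that $\{\rho=0\}$ is a typo for $\{\tau=0\}$.

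The gap is in Part 1. The paper proves it purely by citation of \cite[Lemma 2.4]{DrinovecForstneric2007DMJ}, together with the essential remark that, although that lemma is stated under the extra hypothesis $A\cap bK=\varnothing$, its \emph{proof} goes through when $A$ meets $bK$; if you wish to ``handle Part 1 by citation'' you must make this observation, since the cited statement does not literally cover the present situation. Your alternative construction, as written, does not work. Taking an arbitrary strongly plurisubharmonic $\rho_A$ on a Stein neighbourhood $\Omega_A\supset A$ and an arbitrary strongly plurisubharmonic $\rho_K$ on $\Omega_K\supset K$ and rescaling by positive constants is not enough: for the (regularized) maximum to be well defined on a neighbourhood of $A\cup K$ you need two one-sided comparisons in two \emph{different} regions, namely $\rho_A\ge\rho_K$ near the points of $A$ where one exits the chosen neighbourhood of $K$, and $\rho_K\ge\rho_A$ near the points of $K$ where one exits the chosen tube around $A$. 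These handover regions are not ``a thin shell around $A\cap K$'' (near $A\cap K$ both functions are defined and no comparison is needed there). Moreover, rescaling or adding constants cannot in general produce both inequalities: if, say, both functions are close to $1$ on the two handover sets, the two conditions force $a>b$ and $b>a$ for the scaling constants. One needs functions whose relative size genuinely differs between the two regions, and this is exactly what the proof of \cite[Lemma 2.4]{DrinovecForstneric2007DMJ} arranges: using the $\Oscr(A)$-convexity of $A\cap K$ one builds a strongly subharmonic function on the curve $A$ which is very negative near $A\cap K$ and large on $A$ away from $K$, extends it to a strongly plurisubharmonic function on a neighbourhood of $A$ in $X$ by \cite[Lemma 2.5]{DrinovecForstneric2007DMJ}, and matches it against a plurisubharmonic function near $K$ that is negative on $K$. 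The extension step is the real work, and your sketch never invokes it: the ``barrier'' you mention in your final paragraph lives on $A$ only, and lifting it to a neighbourhood in $X$ with control of the Levi form is precisely the point; that paragraph gestures at the correct mechanism but replaces, rather than repairs, the rescaling argument.
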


\begin{proof}
The first part is \cite[Lemma 2.4]{DrinovecForstneric2007DMJ}. Although the cited lemma is stated for the special case 
$A\cap bK=\varnothing$, its proof also applies in the present situation. 
(An important step in the proof is given by \cite[Lemma 2.5]{DrinovecForstneric2007DMJ} 
which provides an extension of a $\Cscr^2$ strongly subharmonic function on $A$ to a $\Cscr^2$ strongly 
plurisubharmonic function on a neighbourhood of $A$ in $X$.)
The second part is an immediate consequence of the first part and Proposition \ref{prop:hc}.
\end{proof}

Although we do not know whether $A\cup K$ is holomorphically convex for every  
admissible pair $(A,K)$, we will show that this holds for special admissible pairs
and every admissible pair can be enlarged as little as desired to obtain a special admissible pair. 

Theorem \ref{th:AK} clearly follows from the following couple of results,
the first of which is proved in Section \ref{sec:proof}. 

%
%
\begin{theorem}\label{th:AKS} 
For every special admissible pair $(A,K)$ in a complex spaces $X$, the set 
$A\cup K$ is holomorphically convex. 
\end{theorem}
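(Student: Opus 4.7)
The plan is to invoke Lemma \ref{lem:reduction} together with Proposition \ref{prop:hc}: a strongly plurisubharmonic function in a neighbourhood of $A\cup K$ is already available by Lemma \ref{lem:reduction}(1), so it suffices to build a nonnegative plurisubharmonic function $\tau$ defined in an open neighbourhood of $A\cup K$ and vanishing precisely on $A\cup K$. This reduces the theorem to constructing appropriate plurisubharmonic functions in a single Stein neighbourhood.

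Since $(A,K)$ is special admissible, the transverse intersection $A\cap bK$ is a disjoint union of finitely many embedded $\Cscr^2$ arcs $\gamma_1,\ldots,\gamma_p$ with endpoints on $bA$, together with finitely many simple closed curves $\sigma_1,\ldots,\sigma_q\subset \mathring A$. These cut $A$ into finitely many compact complex curves with piecewise $\Cscr^2$ boundary, each of which either lies in $K$ or has its interior disjoint from $K$; call the latter ``outer'' pieces $E_1,\ldots,E_m$. The $\Oscr(A)$-convexity of $A\cap K$ ensures that every $E_j$ meets $bA$, so no outer piece is a compact complex curve confined to $\mathring A\setminus K$.

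The proof then rests on two lemmas. The first, a direct consequence of Stolzenberg's theorem (Theorem \ref{th:Stolzenberg}), asserts that if $K$ is holomorphically convex and $E$ is a single connected outer piece of the above decomposition, then $E\cup K$ is holomorphically convex: one adds to a plurisubharmonic exhaustion of $K$ a plurisubharmonic bump supported near the attached $\Cscr^2$ arcs of $bE$ lying in $bK$, using that the polynomial hull of the union of a polynomially convex set with finitely many $\Cscr^1$ arcs is well-controlled. The second is the welding lemma: given two holomorphically convex special admissible sets whose mutual intersection is a disjoint union of pairwise disjoint complex discs and contains no complex annuli, their union is again holomorphically convex. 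Its proof would glue the two associated plurisubharmonic exhaustions via a regularised maximum across the seam, with the disc-without-annulus condition being precisely what prevents extra points appearing in the holomorphic hull along the seam, again by Stolzenberg's theorem.

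Granted these two lemmas, Theorem \ref{th:AKS} follows by a finite induction on $m$. Set $L_0=K$ and, for $j=1,\ldots,m$, write $L_j=L_{j-1}\cup(K\cup E_j)$ as the union of the two holomorphically convex special admissible sets $L_{j-1}$ (inductive hypothesis) and $K\cup E_j$ (first lemma), and apply the welding lemma. To make the intersection $L_{j-1}\cap(K\cup E_j)$ fit the geometric hypothesis of disjoint complex discs with no annuli, one performs a small pseudoconvex enlargement of $K$ absorbing thin tubular neighbourhoods of those seam arcs $\gamma_i$ and closed curves $\sigma_i$ that would otherwise contribute to the intersection in an incompatible way; the transversality built into the special admissibility guarantees that such an enlargement exists and preserves the holomorphic convexity of $K$. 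The main obstacle is exactly this geometric bookkeeping: arranging at each step that the welding hypothesis is satisfied, which requires a careful choice of the ordering $E_1,\ldots,E_m$ and of the successive enlargements of $K$ so that annular components coming from the closed curves $\sigma_i$ are systematically absorbed before they can obstruct the welding.
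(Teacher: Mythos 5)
Your reduction via Lemma \ref{lem:reduction} and Proposition \ref{prop:hc}, and your welding lemma, are in the spirit of the paper's proof, but the first lemma you rely on --- that $K\cup E_j$ is holomorphically convex for an \emph{entire} outer piece $E_j$, ``as a direct consequence of Stolzenberg's theorem'' --- is a genuine gap. Theorem \ref{th:Stolzenberg} computes hulls inside an ambient Stein space, and the only Stein space available at the outset is a neighbourhood $\Omega$ of $K$; an outer piece $E_j$ need not lie in $\Omega$, and the existence of a Stein neighbourhood of $K\cup E_j$ in which the hull could be controlled is precisely an instance of the theorem you are trying to prove, not something you may assume. This is why the paper's Lemma \ref{lem:hc1} treats only the thin collar $A_c=\{x\in A\cap\Omega: 0\le\rho(x)\le c\}$, which for small $c$ does lie in $\Omega$, and why everything attached afterwards is a small closed disc in $A\setminus K$, each of which is a Stein compact by Lemma \ref{lem:hullA}. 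Moreover, an outer piece can have arbitrary genus and several boundary curves, so even granted an ambient Stein neighbourhood, its joint convexity with $K$ is not obtained by ``adding a plurisubharmonic bump near the attached arcs''; one needs the closing-up argument with Proposition \ref{prop:uniqueness} as in Lemma \ref{lem:hc1}.

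The induction also cannot run as written. Your welding lemma (like the paper's Lemma \ref{lem:hc2}) requires the intersection of the two sets to be a union of pairwise disjoint \emph{discs in the curve} --- genuine two-dimensional overlaps --- disjoint from $K_0\cup K_1$, together with $\overline{L_0\setminus L_1}\cap\overline{L_1\setminus L_0}=\varnothing$. In your scheme both $L_{j-1}$ and $K\cup E_j$ contain $K$, so their intersection contains $K$ (already at the first step it equals $K$), and away from $K$ the two sets meet only along the one-dimensional seams $\gamma_i,\sigma_i$, never along discs; no pseudoconvex enlargement of $K$ can repair this, since the enlarged set still lies in both pieces. The paper's proof instead welds the set $K\cup A_c$ with small discs chosen so that each new disc \emph{overlaps} the previously built set in one or two sub-discs of $A\setminus K$ (the standard handle-type buildup of a bordered surface), which is exactly the geometry that condition (c) of Lemma \ref{lem:hc2} demands; the $\Oscr(A)$-convexity of $A\cap K$ is what guarantees one never has to attach a disc along its entire boundary, which would create the forbidden annular overlap of Remark \ref{rem:hc2}. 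To salvage your outline you would have to replace ``attach whole outer pieces along seams'' by ``attach small overlapping discs to the collar'', i.e.\ essentially the paper's argument.
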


%
%
\begin{lemma}\label{lem:enlarging}
Given an admissible pair $(A,K)$ in a complex space $X$ and an open neighbourhood $U\subset X$
of $A\cup K$, there exists a special admissible pair $(A',K')$ such that $A\cup K\subset A'\cup K'\subset U$.
\end{lemma}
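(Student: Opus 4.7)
I will construct $(A',K')$ by three successive moves: first thicken $K$ to a strongly pseudoconvex compact domain with smooth boundary; then absorb the finitely many interior singular points of $A$ by adjoining small local-coordinate balls to $K$; and finally cut concentric slightly smaller balls out of $A$ to obtain a nonsingular curve. All enlargements will be kept small enough that $A'\cup K'\subset U$, and the subtle point will be preserving the $\Oscr(A')$-convexity of $A'\cap K'$.

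\textbf{Regularising $K$ and resolving singularities.} Since $K$ is holomorphically convex, the discussion after Proposition~\ref{prop:hc} provides a smooth plurisubharmonic function $\rho\ge 0$ on an open Stein neighbourhood $\Omega\subset U$ of $K$ with $\{\rho=0\}=K$ and $\rho$ strongly plurisubharmonic on $\Omega\setminus K$. Applying Sard's theorem simultaneously to $\rho$, $\rho|_A$ and $\rho|_{bA}$, for generic small $\epsilon>0$ the sublevel set $K_\epsilon:=\{\rho\le\epsilon\}$ is a compact strongly pseudoconvex domain with smooth boundary, contained in $U$, whose boundary meets $A$ and $bA$ transversely and contains no singular point of $A$. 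Let $p_1,\dots,p_m\in\mathring A$ be the singular points of $A$. For each $p_i$, in a local holomorphic chart, pick concentric closed balls $\overline{B}_i^-\subsetneq\overline{B}_i$ centred at $p_i$ whose common radius is so small that the $\overline{B}_i$ are pairwise disjoint, contained in $U$, disjoint from $bA$, and each lies either in $\mathrm{int}(K_\epsilon)$ or in $X\setminus K_\epsilon$ according as $p_i\in K_\epsilon$ or not, and such that $bB_i$ and $bB_i^-$ meet $A$ transversely (a generic-radius condition, since $A$ is complex analytic near $p_i$). Set
\[
K' := K_\epsilon\cup\bigcup_{p_i\notin K_\epsilon}\overline{B}_i, \qquad A' := A\setminus\bigcup_i \mathrm{int}(\overline{B}_i^-).
\]
Then $K'$ is a disjoint union of compact strongly pseudoconvex domains with smooth boundary (hence itself has these properties) and, being a disjoint union of holomorphically convex sets, is itself holomorphically convex. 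The set $A'$ is a nonsingular compact complex curve with embedded $\Cscr^2$ boundary $bA\cup\bigcup_i(A\cap bB_i^-)$, and $A\cup K\subset A'\cup K'\subset U$ by construction. Cutting at the smaller radius $B_i^-$ ensures that the new boundary circles of $A'$ lie strictly inside $\mathrm{int}(K')$, so they never appear on $bK'$; consequently $bK'\cap A'=bK_\epsilon\cap A$ and $bK'\cap bA'=bK_\epsilon\cap bA$ are both transverse by the choice of $\epsilon$.

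\textbf{Main obstacle.} The hard part is to verify that $A'\cap K'$ is $\Oscr(A')$-convex. Since the new boundary circles of $A'$ lie in $\mathrm{int}(K')$ and are therefore unreachable from $A'\setminus K'$, this reduces to the topological statement that every connected component of $A\setminus(K_\epsilon\cup\bigcup_i\overline{B}_i)$ reaches $bA$. The difficulty is that in general a slight enlargement of an $\Oscr(A)$-convex set need not remain $\Oscr(A)$-convex, since a thickening may seal off a narrow corridor and trap a ``lake''. I intend to handle this as follows. The $\Oscr(A)$-convexity of $A\cap K$ yields, in each of the finitely many components of $A\setminus K$, a compact arc $\gamma_\alpha$ from an interior point to $bA$; as $\bigcup_\alpha\gamma_\alpha$ is compact and disjoint from $K\cup\{p_1,\dots,p_m\}$, for $\epsilon$ and the ball radii chosen smaller than the distance between these compacta, all $\gamma_\alpha$ persist in $A\setminus(K_\epsilon\cup\bigcup_i\overline{B}_i)$, showing that each original component of $A\setminus K$ still contributes a component of $A'\setminus K'$ reaching $bA$. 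To rule out newly trapped components one uses a standard deformation-retract argument for sublevel sets of plurisubharmonic functions: for small enough $\epsilon$, $A\cap K_\epsilon$ retracts onto $A\cap K$ within $A$ along $-\nabla\rho|_A$, so $A\setminus K_\epsilon$ has the same components-touching-$bA$ structure as $A\setminus K$; the subsequent removal of $A\cap\overline{B}_i$ from the complement, where $A\cap\overline{B}_i$ is a union of topological discs around the singular branches at $p_i$ in the normalisation, cannot disconnect any 2-dimensional component of a Riemann surface from its boundary, completing the verification.
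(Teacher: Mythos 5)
Your overall construction (thicken $K$ to a smooth strongly pseudoconvex sublevel set $K_\epsilon=\{\rho\le\epsilon\}$, excise small balls around the interior singular points of $A$, check transversality generically) is a reasonable variant of the paper's route — the paper instead adds the singular points to $K$ at the outset, so that the single sublevel set $\{\rho\le c\}$ absorbs them, which avoids your separate bookkeeping with the balls $\overline B_i$, $\overline B_i^-$. But the decisive step, the $\Oscr(A')$-convexity of $A'\cap K'$, is exactly where your argument has a genuine gap, and the two mechanisms you propose for it do not work.

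First, your claim that $A\setminus K$ has finitely many components is false in general: take $A=\overline{\D}\times\{0\}\subset\C^2$ and $K\cap A$ a countable fan of radii of $\overline\D$ accumulating on one radius, all reaching $b\D$; this is polynomially convex and $\Oscr(A)$-convex, yet $A\setminus K$ has infinitely many components, so the compactness of $\bigcup_\alpha\gamma_\alpha$ and the ``choose $\epsilon$ smaller than the distance'' step collapse. Second, and more seriously, the mechanism you invoke to exclude newly trapped components — a deformation retraction of $A\cap K_\epsilon$ onto $A\cap K$ along $-\nabla\rho|_A$ — is unjustified: $\rho$ is only smooth, and a smooth nonnegative function vanishing precisely on a compact set can have critical points and extra topology in $\{\rho\le\epsilon\}$ for every $\epsilon>0$ (there is no \L ojasiewicz-type control), so small sublevel sets need not retract onto $K$; moreover, even if such a retraction existed it would not imply that every component of $A\setminus K_\epsilon$ meets $bA$, since the retraction says nothing about how the complement of $A\cap K_\epsilon$ connects to $bA$. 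What actually forces the conclusion is plurisubharmonicity, which your argument never uses at this point: a component $V$ of $A\setminus K_\epsilon$ with $\overline V\cap bA=\varnothing$ and $\overline V\subset\Omega$ is impossible by the maximum principle applied to $\rho|_A$ on $\overline V$, and the remaining danger comes only from components reaching $A\setminus\Omega$, which must be excluded using the $\Oscr(A)$-convexity of $A\cap K$. The paper handles this by choosing a plurisubharmonic $v$ on $A$ with $v<0$ on $A\cap K$ and $v>0$ on $A\setminus\Omega_{c_0}$ and gluing $\phi=\max\{\rho|_A,c_2v\}$ into a single nonnegative plurisubharmonic function on all of $A$ whose sublevel set $\{\phi\le c\}$ equals $A\cap\overline\Omega_c$ for small $c$; $\Oscr(A)$-convexity of $A'\cap K'$ then follows at once from the maximum principle. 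Your proposal is missing this (or an equivalent) argument, so as written the key admissibility condition for $(A',K')$ is not established.
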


\begin{proof} 
We begin by adding to $K$ the finitely many singular points of $A$. (Since the boundary $bA$ is 
embedded, all singular points are contained in the relative interior $A\setminus bA$ of $A$.)
The new set, still denoted $K$, is clearly holomorphically convex.

Pick a smooth plurisubharmonic exhaustion function $\rho\ge 0$ on a Stein neighbourhood 
$\Omega\subset U$ of $K$ such that $\{\rho=0\}=K$ and $\rho$ is strongly plurisubharmonic 
on the set $\{\rho>0\}=\Omega\setminus K$. For every $c>0$ the domain $\Omega_c=\{\rho<c\}$ 
is Stein and its closure $\overline\Omega_c$ is holomorphically convex. 
We now show that $\overline \Omega_c\cap A$ is $\Oscr(A)$-convex if $c>0$ is small enough. 
Fix $c_0>0$. The function $u=\rho|_{A\cap \Omega}: A\cap \Omega\to \R_+$ 
is plurisubharmonic and vanishes precisely on $A\cap K$. Since this set is $\Oscr(A)$-convex, 
there is a plurisubharmonic function $v:A\to\R$ such that $v<0$ on $A\cap K$ and $v>0$ on 
$A\setminus \Omega_{c_0}$. Let 
\[
	c_1=\min\{v(x): x\in A\setminus \Omega_{c_0}\}>0. 
\]
If $c_2>0$ is chosen such that 
\[
	c_1c_2>\max\{u(x): x\in A\cap b\Omega_{c_0}\}, 
\]
then the function $\phi:A\to\R_+$ given by 
\[
	\phi(x)=\begin{cases} \max\{u(x),c_2 v(x)\}, & \text{if $x\in A\cap \Omega_{c_0}$}; \\
					 c_2v(x), & \text{if $x\in A\setminus  \Omega_{c_0}$}
		  \end{cases}			 
\]
is a nonnegative plurisubharmonic function on $A$ that agrees with $u$ on $\{v=0\}$,  
which is a neighbourhood of $A\cap K$ in $A$. Hence, for all sufficiently small $c>0$ we have that
\[
	A\cap \overline\Omega_c = \{x\in A \cap U: u(x)\le c\} = \{x\in A : \phi\le c\},
\]
so this set is $\Oscr(A)$-convex. For most values of $c$, the boundary $b\Omega_c$ is smooth and 
intersects both $A$ and $bA$ transversely. We now choose $K'=\{\rho\le c\} = \overline \Omega_c$ 
for some $c>0$ satisfying the above conditions. Note that all singular points of $A$ are contained 
in the interior of $K'$. Hence, taking $A\setminus \{\rho<b\}$ for some $0<b<c$ close to $c$ 
and rounding off the corners along $bA\cap \{\rho=b\}$ gives a complex curve $A'\subset A$
without singularities and with $\Cscr^2$ boundary. Clearly, the new pair $(A',K')$ satisfies the lemma.
\end{proof}

%
%
\begin{lemma}\label{lem:hereditary}
Assume that $(A,K)$ is an admissible pair and $\Omega\subset X$ is a Stein neighbourhood 
of $A\cup K$ such that $A\cup K$ is $\Oscr(\Omega)$-convex. Then, for any compact 
$\Oscr(A)$-convex subset $L\subset A$ such that $A\cap K\subset L$ the set
$K\cup L$ is also $\Oscr(\Omega)$-convex.
\end{lemma}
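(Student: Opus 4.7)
The plan is to show $K\cup L$ is $\Oscr(\Omega)$-convex by producing, for every $q\in\Omega\setminus(K\cup L)$, a continuous plurisubharmonic function $\psi$ on $\Omega$ with $\psi(q)>\sup_{K\cup L}\psi$; by the equivalence $\wh{E}_{\Oscr(\Omega)}=\wh{E}_{\text{cont.\ PSH}(\Omega)}$ for compact $E$ in the Stein space $\Omega$ recalled in the preliminaries, this suffices. Since $K\cup L\subset A\cup K$ and $A\cup K$ is $\Oscr(\Omega)$-convex, $\wh{(K\cup L)}_{\Oscr(\Omega)}\subset A\cup K$, so only the case $q\in(A\cup K)\setminus(K\cup L)=A\setminus L$ (in particular $q\notin K$) requires attention.

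The $\Oscr(A)$-convexity of $L$ yields a continuous nonnegative plurisubharmonic function $v$ on $A$ with $v^{-1}(0)=L$ and $v(q)>0$---for instance $v=\sum_n 2^{-n}\max(|f_n|^2-c_n,0)$ where $\{f_n\}\subset\Ascr(A)$ separates a countable dense subset of $A\setminus L$ from $L$ and $c_n=\max_L|f_n|^2$. A standard approximation argument built on Lemma~2.5 of \cite{DrinovecForstneric2007DMJ}, using the $\Cscr^2$ boundary regularity of $A$ and its local complex subvariety structure, produces a continuous nonnegative plurisubharmonic extension $\tilde v$ of $v$ on an open neighbourhood $V$ of $A$ in $\Omega$. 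The $\Oscr(\Omega)$-convexity of $A\cup K$ in $\Omega$ furnishes, via the discussion in the preliminaries, a smooth plurisubharmonic exhaustion $\rho\colon\Omega\to\R_+$ with $\rho^{-1}(0)=A\cup K$ and $\rho$ strongly plurisubharmonic on $\Omega\setminus(A\cup K)$.

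Fix $0<\delta<\epsilon<v(q)$. By continuity and $\tilde v|_{A\cap K}=0$, the inequality $\tilde v<\epsilon-\delta$ holds on some open neighbourhood $N$ of $A\cap K$ in $\Omega$. Pick a continuous $d\geq 0$ on $\Omega$ with $d^{-1}(0)=A$ and set $W=\{d<c\}\subset V$; a short compactness argument shows $K\cap\overline{W}$ is contained in any preassigned neighbourhood of $A\cap K$ once $c>0$ is small enough, so we may arrange $\tilde v<\epsilon-\delta$ on $K\cap\overline W$. On the compact set $bW\setminus(A\cup K)$ the function $\rho$ is continuous and strictly positive, hence $\rho\geq\rho_0>0$ there; choosing $C$ large so that $C\rho_0\geq\sup_{\overline W}\tilde v-\epsilon+\delta$ gives $\tilde v-\epsilon\leq C\rho-\delta$ throughout $bW$---on $bW\cap K$ this reduces to $\tilde v<\epsilon-\delta$ (since $\rho=0$), and on $bW\setminus(A\cup K)$ it follows from the choice of $C$. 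Define
\[
\psi=\max(\tilde v-\epsilon,\,C\rho-\delta)\ \text{on $W$},\qquad\psi=C\rho-\delta\ \text{on $\Omega\setminus W$.}
\]
By the standard plurisubharmonic gluing lemma (the two definitions agree along $bW$ by the above), $\psi$ is continuous and plurisubharmonic on $\Omega$. Direct inspection gives $\psi(q)=v(q)-\epsilon>0$ (using $q\in A\subset W$, $\rho(q)=0$, $\tilde v(q)=v(q)$), while $\psi\leq-\delta$ on $L\cup K$: on $L\subset A\subset W$ the two arguments equal $-\epsilon$ and $-\delta$; on $K\cap W$ one has $\tilde v-\epsilon<-\delta$ and $C\rho-\delta=-\delta$; and on $K\setminus W$ the outer formula gives $-\delta$. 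This contradicts $q\in\wh{(K\cup L)}_{\Oscr(\Omega)}$, completing the proof.

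The main technical obstacle is the plurisubharmonic extension of the merely continuous $v$ from $A$ to a continuous plurisubharmonic $\tilde v$ on a neighbourhood in $\Omega$, which requires going beyond the $\Cscr^2$ strongly plurisubharmonic statement of Lemma~2.5 in \cite{DrinovecForstneric2007DMJ} by approximation; the remaining arithmetic of choosing $\epsilon$, $\delta$, $c$, and $C$ is routine.
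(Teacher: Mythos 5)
Your overall strategy is the same as the paper's: reduce to separating a point $q\in A\setminus L$ (since the hull of $K\cup L$ lies in $A\cup K$), build a plurisubharmonic separator on $A$, extend it to an ambient neighbourhood of $A$, and glue it with a nonnegative plurisubharmonic function vanishing on $A\cup K$ whose existence comes from the $\Oscr(\Omega)$-convexity of $A\cup K$. The genuine gap is the step you yourself flag: the passage from the merely continuous plurisubharmonic $v$ on $A$ with $v^{-1}(0)=L$ to a continuous plurisubharmonic extension $\tilde v$ on a neighbourhood of $A$ in $\Omega$. Lemma~2.5 of \cite{DrinovecForstneric2007DMJ} extends $\Cscr^2$ \emph{strongly} subharmonic functions, and there is no standard approximation argument that upgrades it: approximating $v$ by smooth strongly subharmonic functions and extending each gives extensions on shrinking neighbourhoods with no uniform control, and neither their maxima nor their limits are known to produce a continuous plurisubharmonic extension of $v$ up to $bA$. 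The point is that you do not need exact vanishing on $L$ at all: since $L$ is $\Oscr(A)$-convex, there is a \emph{smooth strongly} subharmonic $f$ on $A$ with $f<-1$ on $L$ and $f(q)>0$ (the separation argument of \cite[Theorem 5.1.6]{Hormander1990}), to which Lemma~2.5 applies directly; this is exactly the choice made in the paper, which then glues $\max\{f,0\}$ with the zero function near $K$ and globalizes using $\Oscr(\Omega)$-convexity of $A\cup K$. Your insistence on $v^{-1}(0)=L$ creates the obstacle rather than solving anything.

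A secondary, fixable slip: $bW\setminus(A\cup K)$ is \emph{not} compact when $bW\cap K\neq\varnothing$ (it is a relatively open subset of the compact set $bW$), so the claim $\rho\ge\rho_0>0$ there is false near $bW\cap K$, and your verification of $\tilde v-\epsilon\le C\rho-\delta$ on $bW$ breaks down at those points. The repair is easy with what you already have: since $\tilde v<\epsilon-\delta$ holds on an \emph{open} neighbourhood $N$ of $A\cap K$ containing $K\cap\overline W$, split $bW$ into $bW\cap \overline{N'}$ for a slightly smaller closed neighbourhood $N'\subset N$ of $K\cap bW$ (where $\tilde v-\epsilon<-\delta\le C\rho-\delta$ regardless of $\rho$) and the compact set $bW\setminus N'$, which is disjoint from $A\cup K$ and on which $\rho$ has a positive minimum, so $C$ can be chosen there. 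With these two repairs your argument coincides in substance with the paper's proof.
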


\begin{proof}
We use an idea from Rosay's proof of Rossi's local maximum principle in holomorphic hulls (see \cite[Proposition, p.\ 430]{Rosay2006}), along with the fact that the plurisubharmonic hull of a compact set in a Stein space equals its holomorphically
convex hull \cite{FornaessNarasimhan1980}.

It clearly suffices to show that no point $p\in A\cup K \setminus (K\cup L)=A\setminus L$ belongs to 
$\wh{K\cup L}_{\Oscr(\Omega)}$. Since $L$ is $\Oscr(A)$-convex, there exists
a smooth strongly plurisubharmonic function $f$ on $A$ such that $f(p)>0$ and $f<-1$ on $L$ 
(see \cite[Theorem 5.1.6]{Hormander1990}). By \cite[Lemma 2.5]{DrinovecForstneric2007DMJ} we may assume 
that $f$ is plurisubharmonic on a neighbourhood $U\subset X$ of $A$. Let $V$ denote the 
intersection of the set $\{f\ge -1\}$ with a small compact neighbourhood $W\subset U$ of $A$. Since $f<-1$ on $L$, 
we have that $bV\cap (A\cup K) \subset A\setminus L$ provided $W$ is chosen small enough. 
The function $\tilde \rho=\max\{f,0\}\ge 0$ is then well-defined and plurisubharmonic on a neighbourhood 
of $A\cup K$, it vanishes on a neighbourhood of $K\cup L$, and $\tilde\rho(p)>0$. 
Since $A\cup K$ is $\Oscr(\Omega)$-convex, $\tilde \rho$ extends to a plurisubharmonic function 
on $\Omega$ without changing its values near $A\cup K$. It follows that $p$ does not belong to the 
plurisubharmonic hull of $K\cup L$ in $\Omega$, hence neither to the $\Oscr(\Omega)$-convex hull of $K\cup L$.
\end{proof}


We shall need the following version of a theorem of Stolzenberg \cite{Stolzenberg1966AM}.

%
%
\begin{theorem}\label{th:Stolzenberg}
Assume that $X$ is a Stein space, $K$ is a compact $\Oscr(X)$-convex set in $X$, and
$C_1,\ldots,C_m \subset X$ are arcs or closed Jordan curves of class 
$\Cscr^r$ for some $r\ge 1$. Set $C=\bigcup_{j=1}^m C_j$ and $L=C\cup K$. Then, the set 
\[
	A := \wh{L}_{\Oscr(X)} \setminus L
\] 
is either empty or else a purely one-dimensional complex subvariety of $X\setminus  L$.

Furthermore, if $\overline A$
contains a point $p\in C'_j:=C_j \setminus (\bigcup_{i\ne j}C_i \cup K)$, then it contains the connected 
component $\varGamma$ of $C'_j$ containing $p$, and 
the pair $(A,\varGamma)$ is a local $\Cscr^r$ manifold with boundary at every point in an open 
subset of $\varGamma$ whose complement has zero length.

In particular, if $C_1,\ldots,C_m$ are pairwise disjoint compact embedded $\Cscr^1$ arcs in $X$ 
such that at most one endpoint of each $C_j$ is contained in $K$ and the rest of the arc lies in $X\setminus K$, 
then the set $\bigcup_{j=1}^m C_j \cup K$ is $\Oscr(X)$-convex.
\end{theorem}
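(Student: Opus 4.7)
My plan is to deduce the theorem from Stolzenberg's original result \cite{Stolzenberg1966AM} in $\C^N$ via a holomorphic embedding, and to obtain the final convexity claim from the subvariety structure by a boundary analysis at the free endpoints of the arcs.

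\textbf{Reduction to $\C^N$.} Since $X$ is Stein and $L$ is compact, $\wh L_{\Oscr(X)}$ is compact. I choose a relatively compact open Stein neighbourhood $\Omega\subset X$ of $\wh L_{\Oscr(X)}$ (a sublevel set of a strongly plurisubharmonic exhaustion), and, by the Remmert--Bishop--Narasimhan embedding theorem, realise $\Omega$ as a closed complex subvariety $\iota:\Omega\hookrightarrow \C^N$. For any compact $E\subset\Omega$, the polynomial hull $\wh E_{\mathrm{poly}}$ in $\C^N$ coincides with $\wh E_{\Oscr(\Omega)}$: every $f\in\Oscr(\Omega)$ extends to an entire function on $\C^N$ by Cartan's Theorem B, and entire functions are uniformly approximable by polynomials on compacta; applied to the defining entire functions of $\Omega$, the same argument shows $\wh E_{\mathrm{poly}}\subset\Omega$. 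Combined with the trivial chain $\wh L_{\Oscr(\Omega)}\subset \wh L_{\Oscr(X)}\subset \wh L_{\mathrm{poly}}$ (first, because $\Oscr(X)|_\Omega\subset\Oscr(\Omega)$; second, because polynomials restrict to $\Oscr(X)$), all three hulls coincide. In particular, $K$ is $\Oscr(X)$-convex if and only if $\iota(K)$ is polynomially convex in $\C^N$.

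\textbf{Applying Stolzenberg.} Stolzenberg's theorem, applied in $\C^N$ to the polynomially convex $\iota(K)$ and the $\Cscr^r$ arcs or closed curves $\iota(C_j)$, yields that $\wh L_{\mathrm{poly}}\setminus L$ is either empty or a purely one-dimensional complex subvariety of $\C^N\setminus L$, together with the refined statement that, off a zero-length subset of the connected component $\Gamma$ of $C'_j$ through $p$, the pair $(A,\Gamma)$ is a local $\Cscr^r$ manifold with boundary. Since $\iota$ is holomorphic and $\iota|_{C_j}$ is a $\Cscr^r$ embedding, intersecting with $\Omega$ and pulling back via $\iota^{-1}$ delivers the first two assertions of the theorem in $X$.

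\textbf{The final convexity claim.} Assume the $C_j$ are pairwise disjoint embedded $\Cscr^1$ arcs, each meeting $K$ in at most one endpoint; thus $C'_j=C_j\setminus K$ is connected and contains at least one free endpoint $e_j\in C_j\setminus K$. Suppose, toward contradiction, that $A\ne\varnothing$. If $\overline A$ met $L$ only inside $K$, then by Rossi's local maximum modulus (or, equivalently, by the identity of plurisubharmonic and holomorphic hulls on a Stein space \cite{FornaessNarasimhan1980}) we would obtain $A\subset\wh K_{\Oscr(X)}=K$, contradicting $A\cap L=\varnothing$. Hence $\overline A\cap C'_j\ne\varnothing$ for some $j$, and by the second part the whole arc $C'_j$, including $e_j$, lies in $\overline A$. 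But near $e_j$, the pair $(A,C'_j)$ is, off a length-zero set, a $\Cscr^1$ bordered one-dimensional complex manifold whose boundary arc terminates at the interior point $e_j\in X\setminus L$, and this contradicts the standard fact (a consequence of Wermer's and Stolzenberg's theorems on polynomial hulls of $\Cscr^1$ arcs, applied locally near $e_j$) that a one-dimensional complex subvariety cannot be bounded by a $\Cscr^1$ arc ending freely at an interior point. Thus $A=\varnothing$ and $L$ is $\Oscr(X)$-convex.

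\textbf{Main obstacle.} The reduction to $\C^N$ and the appeal to Stolzenberg are essentially formal once the three hull notions are aligned, and they proceed black-box. The real subtlety is the final endpoint analysis: ruling out a purely one-dimensional complex subvariety whose $\Cscr^1$ boundary arc simply terminates in the ambient space is precisely what distinguishes the convexity conclusion (item~3) from the mere subvariety structure (items~1--2), and is the crux of Stolzenberg/Alexander-type results on arcs. Should the direct no-free-endpoint argument fail to be transparent at the regularity $\Cscr^1$, a fallback is to invoke Alexander's theorem that the union of a polynomially convex set with a $\Cscr^1$ arc attached at exactly one endpoint is polynomially convex, and iterate arc by arc using the pairwise disjointness of the $C_j$ and the $\Oscr(X)$-convexity established at each step as the new base.
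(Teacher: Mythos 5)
Your treatment of the first two assertions follows the same route as the paper: embed a Stein neighbourhood of the hull into $\C^N$, align the hulls, and quote Stolzenberg for the subvariety structure (the paper gets the boundary-regularity/propagation statement from Chirka's theorem \cite[Theorem, p.~255]{Chirka1989} rather than from Stolzenberg's paper itself, but used as a black box this is only an attribution issue). One small slip in the reduction: the inclusion $\wh L_{\Oscr(X)}\subset\wh L_{\mathrm{poly}}$ is not justified by ``polynomials restrict to $\Oscr(X)$'' --- a polynomial pulled back by $\iota$ is only a function in $\Oscr(\Omega)$, not in $\Oscr(X)$. The correct (and standard) fix is to use that your sublevel-set neighbourhood $\Omega$ is Runge in $X$, so that $\Oscr(\Omega)$-hulls of compacts contained in $\Omega$ together with their $\Oscr(X)$-hulls coincide; this is easily repaired.

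The genuine gap is in your argument for the final convexity claim. The ``standard fact'' you invoke --- that a one-dimensional complex subvariety cannot be bounded by a $\Cscr^1$ arc ending freely at an interior point, ``applied locally near $e_j$'' --- is false as a local statement. Take $\gamma$ an arc inside the complex line $\Lambda=\C\times\{0\}\subset\C^2$ terminating at an interior point $e$ of a ball $B$, and $A=(\Lambda\cap B)\setminus\gamma$: this is a closed, pure one-dimensional subvariety of $B\setminus\gamma$ with $\overline A\setminus A=\gamma\cap B$, i.e.\ exactly the forbidden local picture, and this configuration really occurs in hulls (a circle with an interior radial slit: the hull is the disc and $A$ is the slit disc). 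The point is that Chirka-type boundary regularity allows the variety to attach to the arc two-sidedly (continue analytically across it), and then nothing prevents the arc from ending inside the variety locally. Any correct proof of the ``in particular'' statement must therefore be global: one has to use that $\overline A\setminus A\subset K\cup C$ everywhere, that the arcs are pairwise disjoint and meet $K$ in at most one endpoint, and that $K$ is $\Oscr(X)$-convex, to rule out both the one-sided and the two-sided attachments (the latter, continued along the arc, would have to terminate on $K$ or close up, which is where the convexity of $K$ enters). Your fallback --- quoting the known result that a polynomially convex set union a $\Cscr^1$ arc attached at at most one endpoint is polynomially convex, and inducting over the arcs --- would be acceptable as a citation and the induction is sound, but be aware that this result is essentially the single-arc case of the very statement to be proved, so as written your proposal either rests on the false local claim or defers the crux to the literature.
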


Theorem \ref{th:Stolzenberg} is obtained by combining Stolzenberg's theorem in 
\cite{Stolzenberg1966AM}, the boundary regularity theorem for analytic subvarieties given 
by \cite[Theorem, p.\ 255]{Chirka1989} (this result is local, so it also holds in complex spaces), 
and the embedding theorem for Stein spaces into Euclidean spaces; 
see \cite[Theorem 2.4.1]{Forstneric2017E} and the references therein. 

Theorem \ref{th:Stolzenberg} also holds if the curves $C_1,\ldots,C_m$ are piecewise $\Cscr^r$,
since we can add the finitely many nonsmooth points of $C=\bigcup_{j=1}^m C_j$ to the set $K$.
See also the exposition given by Stout in \cite[Theorem 3.1.1]{Stout2007} where 
Stolzeberg's theorem is proved under a weaker regularity assumption on the curves; this generalization
will not be needed.

Another relevant result in this context is the boundary uniqueness theorem for analytic subvarieties
(see Chirka \cite[Proposition 1, p.\ 258]{Chirka1989}); we state it here for future reference.
This result is local, so it also holds in any complex space.

%
%
\begin{proposition}\label{prop:uniqueness} 
Let $M$ be a connected $(2p-1)$-dimensional submanifold of class $\Cscr^1$ in a complex
space $X$, and let $A_1,A_2\subset X\setminus M$ be irreducible $p$-dimensional 
complex subvarieties of $X\setminus M$ whose closures contain $M$. Then, either $A_1=A_2$ 
or else $A_1\cup A_2\cup M$ is a complex analytic subvariety of $X$. 
\end{proposition}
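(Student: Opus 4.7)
The plan is entirely local. I would fix a point $q_0 \in M$ and, using a local holomorphic embedding of a neighbourhood of $q_0$, work in an open subset of $\C^N$. The statement of the proposition is then local along $M$, so it suffices to establish analyticity of $A_1 \cup A_2 \cup M$ near each point of $M$. The key preliminary input is the boundary regularity theorem for pure-dimensional analytic subvarieties with $\Cscr^1$ boundary (the same result of Chirka cited immediately after Theorem~\ref{th:Stolzenberg}): there exists a relatively open dense subset $M^* \subset M$, whose complement $E := M \setminus M^*$ has $(2p-1)$-Hausdorff measure zero in $M$, such that near every $q \in M^*$ each closure $\overline{A_i}$ is a $\Cscr^1$ complex $p$-dimensional submanifold-with-boundary whose boundary is $M^*$.

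Next I would perform the local rigidity analysis at a point $q \in M^*$. Because $\dim_\R M = 2p-1$, the tangent space $T_q M$ contains a unique maximal complex subspace $L_q = T_q M \cap J T_q M$ of complex dimension $p-1$, and the complex $p$-dimensional subspace $W_q := T_q M + J T_q M \subset T_q X$ is well-defined. Using that $\overline{A_i}$ is a $\Cscr^1$ complex submanifold-with-boundary with boundary $M^*$, a Schwarz-type reflection/edge-of-the-wedge argument across the $\Cscr^1$ hypersurface $M$ produces, near $q$, a complex $p$-dimensional submanifold $V_i$ that contains $\overline{A_i}$ and extends $A_i$ holomorphically across $M$ with $T_q V_i = W_q$. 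Any two distinct complex $p$-dimensional submanifolds through $q$ intersect in a set of complex dimension at most $p-1$, hence real dimension at most $2p-2 < \dim_\R M$; since both $V_1$ and $V_2$ contain $M$ locally near $q$, this forces $V_1 = V_2 =: V$ near $q$. Therefore each $A_i$ is, locally near $q$, one of the two connected components of $V \setminus M$.

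Two cases now arise. If $A_1$ and $A_2$ correspond to the same component of $V \setminus M$ near some $q \in M^*$, they coincide as analytic germs at $q$, and irreducibility together with the identity principle for pure $p$-dimensional analytic subvarieties of $X \setminus M$ gives $A_1 = A_2$ globally. Otherwise, $A_1$ and $A_2$ correspond to opposite components near every $q \in M^*$, so $A_1 \cup A_2 \cup M^*$ is open in $V$ near every point of $M^*$, and is therefore a pure $p$-dimensional analytic subvariety of $X \setminus E$. The exceptional set $E = M \setminus M^*$ is closed in $X$ and has zero $(2p-1)$-Hausdorff measure; a standard removable singularities theorem for pure-dimensional analytic subvarieties across such thin sets (as used in \cite{Chirka1989}) then extends the analyticity across $E$, yielding that $A_1 \cup A_2 \cup M$ is a complex analytic subvariety of $X$.

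The main obstacle is the reflection step in the second paragraph: producing, from the $\Cscr^1$ regularity of $\overline{A_i}$ and the CR structure of $M$, a holomorphic extension $V_i$ of $A_i$ across $M$ near a generic boundary point, with correct control on the tangent space $T_q V_i = W_q$. This is a classical but delicate piece of CR geometry that uses both the $\Cscr^1$ hypothesis on $M$ and the $\Cscr^1$ manifold-with-boundary structure supplied by boundary regularity; once it is in place, the dichotomy between same-side and opposite-side configurations and the concluding removable singularities step are routine.
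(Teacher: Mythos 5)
The paper does not actually prove this proposition: it is quoted from Chirka \cite[Proposition 1, p.~258]{Chirka1989} and used as a black box, so the only question is whether your argument stands on its own. It does not: the reflection step, which you yourself flag as the main obstacle, is false as stated. A complex $p$-dimensional submanifold with $\Cscr^1$ boundary does not in general extend holomorphically across its boundary, even near a generic boundary point. For $p=1$, take $f$ holomorphic on the unit disc $\D$, of class $\Cscr^1$ on $\overline{\D}$ (e.g.\ a lacunary series with rapidly decaying coefficients), for which the circle is the natural boundary; then $A=\{(z,f(z)): |z|<1\}\subset\C^2$ has closure a $\Cscr^1$ manifold with boundary along the embedded $\Cscr^1$ curve $M=\{(z,f(z)): |z|=1\}$, yet no complex curve $V\supset \overline A$ exists near any point of $M$, since such a $V$ would be a local graph extending $f$ holomorphically past the circle. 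So the submanifolds $V_1,V_2$ on which your whole case analysis rests need not exist: Schwarz reflection requires real-analyticity of $M$, and edge-of-the-wedge requires data on both sides of $M$, which a single $A_i$ does not provide. Deducing "opposite sides" only after constructing the $V_i$ makes the argument circular.

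The correct mechanism uses the two varieties jointly rather than extending each one separately. After the legitimate reduction to a generic point $q\in M^*$ where both $\overline{A_i}$ are $\Cscr^1$ manifolds with boundary, your tangent-space computation does show both are tangent to the complex $p$-plane $W_q=T_qM+JT_qM$, so both are local graphs of holomorphic maps over half-domains in $W_q$ cut out by the $\Cscr^1$ hypersurface which is the projection of $M$, with the same continuous boundary values along it. If the half-domains agree (same side), the difference of the two graphing maps is holomorphic on one side of a $\Cscr^1$ hypersurface, continuous up to it and vanishing on it, hence identically zero by a boundary uniqueness theorem for holomorphic functions; this gives $A_1=A_2$ near $q$ and then globally by irreducibility. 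If they are opposite, then $A_1\cup M\cup A_2$ is the graph of a continuous map on a full neighbourhood of $q$ in $W_q$, holomorphic off the $\Cscr^1$ hypersurface, hence holomorphic by a Morera/Rad\'o-type removability argument; only at this point does an analytic extension appear, and it is the union itself, not separate extensions of $A_1$ and $A_2$. Your globalization via irreducibility and the final Shiffman-type removability across the exceptional set $E$ of zero $(2p-1)$-dimensional measure are fine, but as written the proof smuggles the conclusion of the opposite-side case into the hypotheses of both cases.
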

 
Proposition \ref{prop:uniqueness}  implies the following. 
Assume that $K$ is a compact $\Oscr(X)$-convex set in a Stein space $X$ and 
$A$ is a compact complex curve with $\Cscr^1$ boundary in $X$. Then, 
$\wh{A\cup K}_{\Oscr(X)}= A' \cup A\cup K$, where $A'$ is a complex curve 
such that $\overline {A'}\setminus A'\subset K\cup bA$. In particular, 
$A'$ has at most finitely many irreducible components. 

%
%
\begin{lemma}\label{lem:hullA}
If $A$ is a compact complex curve with $\Cscr^2$ boundary in a Stein space $X$,
then $A':=\wh A_{\Oscr(X)}\setminus A$ is either empty or a union of finitely many irreducible complex curves 
$A'_1,\ldots, A'_k$ such that $\overline{A'_j}\setminus A'_j$ is a union of curves in $bA$ for every 
$j=1,\ldots,k$. Furthermore, $A$ admits a Stein neighbourhood $V\subset X$ such that $\wh A_{\Oscr(V)} = A$.
For such $V$ and for any compact $\Oscr(A)$-convex set $K\subset A$ with piecewise $\Cscr^1$ 
boundary we have that $\wh K_{\Oscr(V)} = K$.
\end{lemma}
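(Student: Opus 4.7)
The plan is to prove the three assertions in order.

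For the structure of $A'$, I start from the maximum principle on each irreducible component of the complex curve, which yields $\wh A_{\Oscr(X)} = \wh{bA}_{\Oscr(X)}$. Since $bA$ is a disjoint union of finitely many closed $\Cscr^2$ Jordan curves, Theorem~\ref{th:Stolzenberg} with $K=\varnothing$ asserts that $\wh{bA}_{\Oscr(X)} \setminus bA$ is a purely one-dimensional complex subvariety of $X \setminus bA$. Since it is contained in the compact set $\wh A_{\Oscr(X)}$, it has only finitely many irreducible components. Those contained in $\mathring A = A\setminus bA$ account for $\mathring A$, and the remaining finitely many are the $A'_j$. The boundary regularity statement in Theorem~\ref{th:Stolzenberg} applies with each $C'_i$ equal to the whole Jordan curve $C_i$, forcing $\overline{A'_j}$ to contain every component of $bA$ that it meets. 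Hence $\overline{A'_j}\setminus A'_j$ is a union of full connected components of $bA$.

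For the existence of $V$, Lemma~\ref{lem:reduction}(1) supplies a strongly plurisubharmonic $\rho$ on a neighbourhood of $A$. By Proposition~\ref{prop:hc} it then suffices to construct a nonnegative plurisubharmonic $\tau$ on a neighbourhood of $A$ with $\{\tau=0\}=A$; holomorphic convexity of $A$ follows, and Definition~\ref{def:holoconvex} yields an open Stein $V\supset A$ with $\wh A_{\Oscr(V)}=A$. I construct $\tau$ locally: at a point of $\mathring A$, local generators of the ideal sheaf of $\mathring A$ (Cartan's Theorem~A in a Stein chart) give a sum of squared moduli that is nonnegative psh and vanishes exactly on $\mathring A$ in that chart; at a point $p\in bA$, a holomorphic embedding flattens $A$ locally to $\overline D\times\{0\}\subset\C^N$ for a $\Cscr^2$-bordered planar domain $D\subset\C$, and then $d(\cdot,\overline D)^2+|w|^2$ is plurisubharmonic on a small neighbourhood of $p$ and vanishes exactly on this local model of $A$. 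A finite cover together with the regularized maximum (see \cite[p.\ 69]{Forstneric2017E}) patches these local pieces into a global $\tau$ with the required properties. The main technical obstacle lies in this gluing across $bA$: it must be organized so that plurisubharmonicity is preserved and the zero set of the patched function is exactly $A$, not an enlargement outside $A$ where some of the local models have been cut off.

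For the convexity of $K$, I exhibit, for every $p\in V\setminus K$, a function $f\in\Oscr(V)$ with $|f(p)|>\max_K|f|$. If $p\in V\setminus A$, the identity $\wh A_{\Oscr(V)}=A$ just established supplies $f\in\Oscr(V)$ with $|f(p)|>\max_A|f|\ge\max_K|f|$, since $K\subset A$. If instead $p\in A\setminus K$, the characterization of $\Oscr(A)$-convexity recalled in Section~\ref{sec:prelim}---every connected component of $A\setminus K$ meets $bA$---produces $g\in\Ascr(A)$ with $|g(p)|>\max_K|g|$. Mergelyan approximation on the compact bordered Riemann surface arising as the normalization of $A$ (cf.\ \cite[Theorem 8.1]{Stout1965TAMS} and the references recalled after Definition~\ref{def:admissible}) approximates $g$ uniformly on $A$ by functions holomorphic in a one-dimensional neighbourhood of $A$; Oka--Weil approximation, valid because $A$ is $\Oscr(V)$-convex, then approximates these by elements of $\Oscr(V)$ uniformly on $A$. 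A sufficiently close approximant $f\in\Oscr(V)$ satisfies $|f(p)|>\max_K|f|$, completing the argument.
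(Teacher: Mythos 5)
The main gap is in your construction of $\tau$ near boundary points. The claim that a holomorphic embedding flattens $A$ near $p\in bA$ onto $\overline D\times\{0\}\subset\C^N$ presupposes that the germ of $A$ at $p$ is contained in a one-dimensional complex submanifold of an ambient neighbourhood, i.e., that $A$ continues analytically across $bA$ near $p$. For a curve with merely $\Cscr^2$ boundary this is false in general: let $A\subset\C^2$ be the graph over $\overline\D$ of a Hadamard lacunary series which is $\Cscr^2$ up to the circle but has the circle as natural boundary; then no neighbourhood of any boundary point contains a complex curve containing $A$ near that point, so your local model does not exist. This is exactly the difficulty that makes the Stein/holomorphically convex neighbourhood statement for $A$ nontrivial, and it is why the paper does not build a defining plurisubharmonic function by hand but quotes \cite[Theorem 2.1]{DrinovecForstneric2007DMJ} to get that $A$ is a Stein compact, and then obtains $\wh A_{\Oscr(V)}=A$ by choosing a Stein neighbourhood $V$ containing none of the finitely many curves $A'_1,\ldots,A'_k$. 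The gluing problem you flag is real but secondary: the local boundary model itself is already unavailable.

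Two further points. In the first part, the ``furthermore'' clause of Theorem \ref{th:Stolzenberg} concerns the closure of the entire hull variety, which here already contains $\mathring A$ attached along all of $bA$; it says nothing about an individual irreducible component $A'_j$, so you cannot conclude from it that $\overline{A'_j}$ contains every boundary curve it meets. That step is what Proposition \ref{prop:uniqueness}, together with the $\Oscr$-convexity of finite unions of disjoint arcs from the last part of Theorem \ref{th:Stolzenberg}, is used for in the paper; likewise, finiteness of the number of irreducible components does not follow merely from $A'$ being contained in the compact set $\wh A_{\Oscr(X)}$. In the last part, your Mergelyan plus Oka--Weil route needs approximants holomorphic in an $X$-neighbourhood of $A$ in order to apply Oka--Weil in $V$; Mergelyan on the normalization only produces functions holomorphic in an abstract one-dimensional neighbourhood of $A$, and (again because $A$ need not extend analytically across $bA$, and may be singular) these neither extend to, nor are known without further argument to be approximable by, functions on $X$-neighbourhoods of $A$. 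The paper avoids approximation theory entirely: it applies Theorem \ref{th:Stolzenberg} to $bK$ inside $V$ and uses $\wh A_{\Oscr(V)}=A$ together with the $\Oscr(A)$-convexity of $K$ to rule out any extra analytic components of the hull; alternatively, Lemma \ref{lem:hereditary} with $K=\varnothing$ gives the same conclusion.
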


\begin{proof}
The first claim follows from Theorem \ref{th:Stolzenberg} and Proposition \ref{prop:uniqueness}.
By \cite[Theorem 2.1]{DrinovecForstneric2007DMJ}, $A$ is a Stein compact in $X$. Let us now consider
the second part. Choosing a Stein neighbourhood $V\subset X$ of $A$ which does not
contain any of the finitely many complex curves $A'_1,\ldots, A'_k$ constituting $A':=\wh A_{\Oscr(X)}\setminus A$,
we get that $\wh A_{\Oscr(V)} = A$. Let $K\subset A$ be a compact $\Oscr(A)$-convex set with piecewise $\Cscr^1$ 
boundary $bK=\bigcup_{i=1}^m \varGamma_i$, where $\varGamma_i$ are $\Cscr^1$ arcs
such that any two of them are either disjoint or they meet at an endpoint.
By Theorem \ref{th:Stolzenberg}, the $\Oscr(V)$-hull of 
$bK$ consists of finitely many complex curves $B_1,\ldots, B_l$ with boundaries in $bK$. 
Since $\wh A_{\Oscr(V)} = A$, any curve $B_i$ from this family which is not contained in $K$ 
is a domain in $A\setminus K$ having boundary in $bK$; but there are no such 
domains since $K$ is $\Oscr(A)$-convex. Hence, $\wh K_{\Oscr(V)} = K$. 
\end{proof}

%
%
%
%
\section{Proof of Theorem \ref{th:AKS}}\label{sec:proof}

In this section we prove Theorem \ref{th:AKS}, i.e., we show that 
for every special admissible pair $(A,K)$ in a complex spaces (see Definition \ref{def:admissible}) 
the set $A\cup K$ is holomorphically convex. As explained in the previous section,
this and Lemma \ref{lem:enlarging} imply Theorem \ref{th:AK}.


The proof is based on a couple of lemmas. The first one, Lemma \ref{lem:hc1}, shows that the union 
of $K$ and a collar in $A$ around $A\cap bK$ is holomorphically convex.
The second one, Lemma \ref{lem:hc2}, shows that the union of two special 
holomorphically convex admissible sets, whose intersection is a union of pairwise
disjoint discs, is again holomorphically convex. The proof amounts to
welding plurisubharmonic functions defining the respective admissible sets along their intersection.
Theorem \ref{th:AKS} follows easily from these two lemmas.

Recall that $A$ is a union of finitely many pairwise disjoint 
smooth compact complex curves with $\Cscr^2$ boundaries, $K=\{\rho\le 0\}$ where 
$\rho$ is a strongly plurisubharmonic exhaustion function on a Stein neighbourhood $\Omega$ of $K$
(hence $K$ is $\Oscr(\Omega)$-convex), and $0$ is a regular value of both $\rho|_{A}$ and $\rho|_{bA}$.
By compactness of $A$ there is a $\delta>0$ such that 
\begin{equation}\label{eq:regularvalue}
	\text{every number $c\in [0,\delta]$ is a regular value of $\rho|_{bA}$ and $\rho|_{bA}$.} 
\end{equation}
For any $c\in [0,\delta]$ we set 
\begin{equation}\label{eq:Gammac}
	A_c = \{x\in A\cap \Omega: 0\le \rho(x)\le c\},\quad  \ 
	\varGamma_c = \{x\in A\cap \Omega: \rho(x)= c\}. 
\end{equation}
Note that $A_c$ is an exterior collar in $A$ around $A\cap bK$.
Condition \eqref{eq:regularvalue} implies that $\varGamma_c$  
is a union of finitely many pairwise disjoints arcs and closed curves of class $\Cscr^2$,
and there is a diffeomorphism $\varGamma_0\times [0,c] \to A_c$ mapping 
$\varGamma_0\times \{c'\}$ onto $\varGamma_{c'}$ for every $c'\in [0,c]$.
Note that $(K,A_c)$ is a special admissible pair, except that $bA_c$ is only piecewise $\Cscr^2$. 

%
%
\begin{lemma}\label{lem:hc1}
For every $c\in [0,\delta]$ the set $K\cup A_c$ is $\Oscr(\Omega)$-convex.
If $\varGamma_c$ does not contain any closed curves then $K\cup \varGamma_c$ 
is also $\Oscr(\Omega)$-convex.
\end{lemma}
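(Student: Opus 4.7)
My plan is to treat the two assertions separately and, for the first, to compute the hull $\wh{K\cup A_c}_{\Oscr(\Omega)}$ directly by combining the maximum principle on the complex curve $A_c$ with Theorem \ref{th:Stolzenberg} and Proposition \ref{prop:uniqueness}.

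The second assertion is a direct application of Theorem \ref{th:Stolzenberg}. When $\varGamma_c$ has no closed curves it is a finite disjoint union of embedded $\Cscr^2$ arcs, each with both endpoints on $bA\cap\{\rho=c\}$; since $\rho\equiv c>0$ on the arcs and at their endpoints, all arcs and endpoints lie in $\Omega\setminus K$. Applying the final clause of Theorem \ref{th:Stolzenberg} with $X=\Omega$, the $\Oscr(\Omega)$-convex compactum $K$, and the arcs of $\varGamma_c$ as the $C_j$'s (each with zero endpoints in $K$, so certainly satisfying the ``at most one endpoint'' hypothesis) yields that $K\cup\varGamma_c$ is $\Oscr(\Omega)$-convex.

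For the first assertion, the maximum principle applied to holomorphic functions restricted to the complex curve $A_c$ gives $A_c\subseteq\wh{bA_c}_{\Oscr(\Omega)}$, hence $\wh{K\cup A_c}_{\Oscr(\Omega)}=\wh{K\cup bA_c}_{\Oscr(\Omega)}$. Since adding finitely many points to an $\Oscr(\Omega)$-convex compactum preserves its convexity, I may absorb the finitely many corner points of $bA_c$ (which lie at $bA\cap\{\rho=c\}$ and at $bA\cap bK$) into $K$; then $bA_c$ becomes a piecewise $\Cscr^2$ family of embedded arcs and closed curves, and Theorem \ref{th:Stolzenberg} yields $\wh{K\cup bA_c}_{\Oscr(\Omega)}=K\cup bA_c\cup V$, where $V$ is either empty or a pure one-dimensional complex subvariety of $\Omega\setminus(K\cup bA_c)$.

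It remains to show $V=\mathring A_c$. The inclusion $\mathring A_c\subseteq V$ follows from the first identity and the fact that $\mathring A_c\subseteq\wh{bA_c}_{\Oscr(\Omega)}$. For the reverse, I classify the irreducible components of $V$ according to which smooth one-dimensional piece of $bA_c$ their closures meet, noting that every component's closure must meet $K\cup bA_c$ since $\Omega$ is Stein and contains no compact positive-dimensional subvarieties. A component whose closure meets $\varGamma_c$ lies in $A$ by Proposition \ref{prop:uniqueness}; combined with the bound $\rho\le c$ forced on the hull by $\rho$ being plurisubharmonic with $\sup_{K\cup A_c}\rho=c$, such a component must be an irreducible component of $\mathring A_c$. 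A component whose closure meets $A\cap bK$ similarly lies in $A\setminus K$, and the same $\rho$-bound forces it into $\mathring A_c$. The principal obstacle is a component whose closure meets $bA\cap A_c$: by Proposition \ref{prop:uniqueness} this would be an irreducible complex curve extending $A$ past $bA$, hence an irreducible component of $\wh A_{\Oscr(\Omega)}\setminus A$ in the sense of Lemma \ref{lem:hullA}, whose closure-minus-itself is a union of curves in $bA$. Choosing $\delta>0$ small enough that $bA\cap\{\rho>c\}\ne\emptyset$ for every $c\in[0,\delta]$, any such global extension component has its boundary reaching into $bA\cap\{\rho>c\}$; since the closure of the component must lie in the hull and hence in $\{\rho\le c\}$, this is a contradiction. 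Hence $V=\mathring A_c$, giving $\wh{K\cup A_c}_{\Oscr(\Omega)}=K\cup A_c$, as required.
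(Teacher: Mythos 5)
Your treatment of the second assertion (the arcs of $\varGamma_c$ lie in $\Omega\setminus K$, so the last clause of Theorem \ref{th:Stolzenberg} applies) is exactly the paper's argument, and your reduction of the first assertion to the hull of $K\cup bA_c$ via the maximum principle, absorption of corner points into $K$, and Theorem \ref{th:Stolzenberg} is also in line with the paper's proof. The problems are in how you rule out the ``extra'' components of $V$. First, your trichotomy is not exhaustive: an irreducible component $C$ of $V$ has its frontier $\overline C\setminus C$ in $K\cup bA_c$, but nothing prevents this frontier from lying entirely in $K$ (for instance on $bK$ away from $A$). Such a component meets none of your three cases, and none of your exclusion mechanisms (the bound $\rho\le c$, Lemma \ref{lem:hullA}) touches it; what kills it is the maximum principle against the $\Oscr(\Omega)$-convexity of $K$ (a relatively compact curve in $\Omega\setminus K$ with boundary in $K$ would lie in $\wh K_{\Oscr(\Omega)}=K$), an argument you never invoke but which is precisely the paper's punchline.

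Second, and more seriously, your ``principal obstacle'' case is handled incorrectly. If $\overline C$ meets $bA\cap A_c$ and Proposition \ref{prop:uniqueness} shows that $C$ continues $A$ analytically across an arc of $bA$, it does \emph{not} follow that $C$ is an irreducible component of $\wh A_{\Oscr(\Omega)}\setminus A$: membership in the hull of $A$ alone is not implied by analytic attachment along part of $bA$ (a flat continuation of a polynomially convex disc is the standard counterexample), so the description in Lemma \ref{lem:hullA} of the frontier as a union of entire boundary curves of $A$ is unavailable. The frontier of $C$ need only lie in $K\cup bA_c$; e.g.\ a curve attached along an arc of $bA\cap\{0<\rho<c\}$ whose remaining boundary sits on $bK$ is perfectly compatible with everything you prove, yet must be excluded. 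Your proposed fix of shrinking $\delta$ so that $bA\cap\{\rho>c\}\ne\varnothing$ both modifies the $\delta$ fixed in \eqref{eq:regularvalue} (so you would prove a weaker statement than the lemma asserts) and can fail outright, since Definition \ref{def:admissible} allows a whole component of $bA$ to lie in the interior of $K$. The paper disposes of all these configurations at once by a different mechanism: using Proposition \ref{prop:uniqueness} it welds any extra curve $C$ with the relevant components of $A_c\setminus K$ along the shared parts of $bA_c\setminus K$, producing a closed complex curve in $\Omega\setminus K$ with boundary in $K$, which contradicts the $\Oscr(\Omega)$-convexity of $K$. Without this (or an equivalent) argument, your case analysis does not close.
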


\begin{proof}
Conditions \eqref{eq:regularvalue} imply that the set $A_c$ in \eqref{eq:Gammac} 
is an embedded compact complex curve with piecewise $\Cscr^2$ boundary (and without singularities)
which is a disjoint union of at most finitely many closed discs $D_1,\ldots,D_m$ and annuli $E_1,\ldots, E_l$. 
Theorem \ref{th:Stolzenberg} applied to the set $K\cup bA_c$ shows that its $\Oscr(\Omega)$-convex 
hull contains $K\cup A_c$ together with possibly other closed, irreducible, relatively
compact complex curves in $\Omega\setminus (K\cup bA_c)$. 
Suppose that $C$ is such a curve which is not contained in $A_c$. 
If the closure $\overline C$ intersects one of the boundaries $bD_j\setminus K$, then
it contains this entire set, and by Proposition \ref{prop:uniqueness}
the union $C\cup D_j$ is a complex curve along $bD_j\setminus K$. A similar argument
holds if $\overline C$ intersects one of the closed curves $bE_j\setminus K$. 
The upshot is that $C$ together with some connected components of the set $A_c\setminus K$ 
defines a closed complex curve in $\Omega\setminus K$ whose boundary lies in $K$. 
This is impossible since $K$ is $\Oscr(\Omega)$-convex. Hence, $K\cup A_c$ is
$\Oscr(\Omega)$-convex. 

If $\varGamma_c$ does not contain any closed curves, then $K\cup \varGamma_c$
is $\Oscr(\Omega)$-convex by the last part of Theorem \ref{th:Stolzenberg}. 
\end{proof}

The next lemma explains how to glue a pair of holomorphically convex special admissible sets under 
a suitable geometric condition on their intersection. This is the main new idea in our approach.

%
%
\begin{lemma}\label{lem:hc2}
Assume that $(A_0,K_0)$ and $(A_1,K_1)$ are special admissible pairs in a complex space $X$ 
satisfying the following conditions.
\begin{enumerate}[\rm (a)]
\item The special admissible set $L_i:=A_i\cup K_i$ for $i=0,1$ is holomorphically convex.
\item $\overline {L_0\setminus L_1} \cap \overline {L_1\setminus L_0}=\varnothing$.
\item $L_0\cap L_1=A_0\cap A_1$ is a union of pairwise disjoint discs $D_1,\ldots,D_m$
which do not intersect the set $K:=K_0\cup K_1$. 
\end{enumerate}
Then, the pair $(A=A_0\cup A_1,K=K_0\cup K_1)$ is a special admissible pair and the set
$L_0\cup L_1= A\cup K$ is holomorphically convex.
\end{lemma}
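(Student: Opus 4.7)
The plan is to apply Proposition \ref{prop:hc} to $A\cup K$, which requires (a) a strongly plurisubharmonic function on a neighbourhood of $A\cup K$, and (b) a nonnegative plurisubharmonic function $\tau\ge 0$ on such a neighbourhood with $\{\tau=0\}=A\cup K$. I first verify that $(A,K)$ is itself a special admissible pair. Condition (c) combined with $(\bigsqcup D_j)\cap K=\varnothing$ immediately forces $K_0\cap K_1=\varnothing$ and $A_i\cap K_{1-i}=\varnothing$, so $K=K_0\sqcup K_1$ is holomorphically convex and $A\cap K=(A_0\cap K_0)\sqcup(A_1\cap K_1)$ is $\Oscr(A)$-convex; Proposition \ref{prop:uniqueness} applied along the interior of each $D_j$ identifies the germs of $A_0$ and $A_1$ along $D_j$ inside one ambient irreducible complex curve, from which $A=A_0\cup A_1$ is seen to be a smooth complex curve with $\Cscr^2$ boundary. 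Lemma \ref{lem:reduction}(1) then yields (a).

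The heart of the proof is (b). Pick nonnegative plurisubharmonic functions $\tau_i$ on Stein neighbourhoods $\Omega_i$ of $L_i$ with $\{\tau_i=0\}=L_i$, and put $R_i:=\overline{L_i\setminus L_{1-i}}$; by (b) these are disjoint closed sets. A local analysis at each $bD_j$ via Proposition \ref{prop:uniqueness} shows that either both $A_0,A_1$ stop at $bD_j$ or exactly one of them extends past it (simultaneous extension would force $A_0\cap A_1\supsetneq\bigsqcup D_j$). This partitions $\{1,\dots,m\}=J_0\sqcup J_1\sqcup J_*$, where $A_i$ extends past $bD_j$ for $j\in J_i$ and neither does for $j\in J_*$; in each case one checks that $R_i\cap D_j=\varnothing$ whenever $j\notin J_i$. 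The seeds
\[ S_0:=R_0\cup\bigsqcup_{j\in J_0}D_j,\quad S_1:=R_1\cup\bigsqcup_{j\in J_1}D_j,\quad S_*:=\bigsqcup_{j\in J_*}D_j \]
are therefore pairwise disjoint closed sets with $S_0\cup S_1\cup S_*=A\cup K$. I then choose pairwise disjoint open neighbourhoods $V_0\subset\Omega_0$, $V_1\subset\Omega_1$, $W\subset\Omega_0\cap\Omega_1$ of $S_0,S_1,S_*$ respectively, and define $\tau$ to equal $\tau_0$ on $V_0\cup W$ and $\tau_1$ on $V_1$. Since the three open sets are pairwise disjoint, $\tau$ is nonnegative plurisubharmonic on $V_0\cup V_1\cup W$; on each component the zero locus equals $L_i\cap V$ and, by disjointness from the opposite $R_{1-i}$, coincides with $(L_0\cup L_1)\cap V$. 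Thus $\{\tau=0\}=A\cup K$, and Proposition \ref{prop:hc} concludes.

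The main obstacle I expect is the geometric setup, namely the verification that the seeds $S_0,S_1,S_*$ can be engulfed by pairwise disjoint open neighbourhoods; this is the content of the disc-by-disc case analysis at each $bD_j$ which exploits condition (b) together with Proposition \ref{prop:uniqueness} to rule out the double-extension case. Once that is in place, the patching of the three plurisubharmonic pieces on disjoint components is immediate, and Proposition \ref{prop:hc}, fed with this $\tau$ and the strongly plurisubharmonic function from Lemma \ref{lem:reduction}(1), yields the desired holomorphic convexity of $A\cup K$.
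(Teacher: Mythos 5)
Your overall reduction (produce a nonnegative plurisubharmonic $\tau$ with zero set exactly $A\cup K$ and feed it, together with Lemma \ref{lem:reduction}(1), into Proposition \ref{prop:hc}) is the same as the paper's, and the existence of $\tau_i\ge 0$ vanishing precisely on $L_i$ is indeed immediate from hypothesis (a). The gap is in your disc-by-disc trichotomy, which is false in exactly the situation the lemma is designed for. A disc $D_j=A_0\cap A_1$ typically has boundary of the form \eqref{eq:bDj}: an arc $\alpha_{0,j}\subset bA_0$ lying in the interior of $A_1$, an arc $\alpha_{1,j}\subset bA_1$ lying in the interior of $A_0$, and two arcs in $bA_0\cap bA_1$. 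Thus $A_0$ extends past $bD_j$ across $\alpha_{1,j}$ while $A_1$ simultaneously extends past $bD_j$ across $\alpha_{0,j}$; this does \emph{not} force $A_0\cap A_1\supsetneq\bigsqcup D_j$, because the two extensions occur along disjoint arcs into different regions, and Proposition \ref{prop:uniqueness} gives no contradiction here. Consequently $R_0=\overline{L_0\setminus L_1}$ meets $D_j$ along $\alpha_{1,j}$ and $R_1=\overline{L_1\setminus L_0}$ meets $D_j$ along $\alpha_{0,j}$, so $D_j$ cannot be assigned to a ``seed'' disjoint from the opposite $R_{1-i}$, your sets $S_0,S_1,S_*$ are not pairwise disjoint, and the disjoint neighbourhoods $V_0,V_1,W$ on which you define $\tau$ piecewise do not exist. (Indeed, this overlapping configuration is the whole point: in the proof of Theorem \ref{th:AKS} the lemma is applied to attach a disc $A_1$ meeting the previous set in one or two such overlap discs; if your trichotomy held, the lemma would only cover degenerate attachments.)

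What is missing is precisely a mechanism for transitioning from $\tau_0$ to $\tau_1$ across each disc $D_j$, where neither function alone has the correct zero set (e.g.\ $\tau_0>0$ on the part of $A_1$ just beyond $\alpha_{0,j}\subset bA_0$). The paper achieves this by enlarging $L_i$ with compact tubular neighbourhoods $\Lambda_i$ of the arcs $\varGamma_i=\bigcup_j\alpha_{i,j}$ and proving that $L_i\cup\Lambda_i$ is still $\Oscr(\Omega_i)$-convex --- this is the real work, using Lemma \ref{lem:hereditary} (to see $L'_i\cup\varGamma_i$ is $\Oscr(\Omega_i)$-convex), Theorem \ref{th:Stolzenberg} and the boundary uniqueness argument to rule out spurious curves in the hull. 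One then takes $\tau_i\ge0$ vanishing exactly on $L_i\cup\Lambda_i$ and welds them by $\tau=\max\{\tau_0,\tau_1\}$: since $\tau_0\equiv0$ on $\Lambda_0$, the function $\tau_1$ takes over in the maximum before one leaves the domain of $\tau_0$, and disjointness of $\Lambda_0$ and $\Lambda_1$ guarantees the zero set is exactly $A\cup K$. Your proposal, as written, skips this welding device entirely, and condition (b) alone does not supply the disjointness your patching requires.
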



\begin{proof}
It is clear that $(A,K)$ is a special admissible pair. In particular, the conditions imply that 
any point in $A\setminus K$ can be connected by a path in this set to a point in $bA\setminus K$
and the other conditions trivially follow from the hypotheses.

By the assumption, the admissible set $L_i$ is $\Oscr(\Omega_i)$-convex
in a Stein domain $\Omega_i\subset X$. 

The boundary of every disc $D_j$ in $L_0\cap L_1$ is of the form
\begin{equation}\label{eq:bDj}
	bD_j=\alpha_{0,j} \cup \beta_{j} \cup \alpha_{1,j} \cup \gamma_j,
\end{equation}
where the arc $\alpha_{0,j}\subset bA_0$ is the closure of $bD_j\cap (A_1\setminus bA_1)$,
the arc $\alpha_{1,j}\subset bA_1$ is the closure of $bD_j\cap (A_0\setminus bA_0)$,
and the arcs $\beta_j,\gamma_j\subset bA_0\cap bA_1$ connect the respective endpoints of 
$\alpha_{0,j}$ and $\alpha_{1,j}$. For $i=0,1$ we define the following sets:
\[
	\varGamma_i =\bigcup_{j=1}^m \alpha_{i,j},\qquad 
	L'_0 = \overline {L_0\setminus L_1}, \qquad 
	L'_1 = \overline {L_1\setminus L_0}.
\]	
Note that the set $\overline{A_0\setminus A_1} \cup \varGamma_0\subset A_0$ contains
$A_0\cap K_0$ and is clearly $\Oscr(A_0)$-convex. 
Hence, Lemma \ref{lem:hereditary} implies that the set 
\[
	L'_0\cup \varGamma_0=K_0\cup \overline{A_0\setminus A_1} \cup \varGamma_0
\]
is $\Oscr(\Omega_0)$-convex. The symmetric argument shows that $L'_1\cup \varGamma_1$
is $\Oscr(\Omega_1)$-convex.

Since $L'_i$ and $\varGamma_i$ are disjoint for $i=0,1$, it follows from general properties of
$\Oscr(\Omega_i)$-convex sets that each arc $\alpha_{i,j}$ $(j=1,\ldots,m)$ 
has a small compact tubular neighbourhood $\Lambda_{i,j}\subset \Omega_i$ 
(defined as a sublevel set of a nonnegative strongly plurisubharmonic function 
vanishing precisely on $\alpha_{i,j}$; see \cite[Corollary 3.5.2]{Forstneric2017E}) 
such that these sets are pairwise disjoint, they are disjoint from $L'_i$
and such that, setting $\Lambda_i=\bigcup_{j=1}^m \Lambda_{i,j}$, 
the compact set $L'_i\cup \Lambda_i$ is $\Oscr(\Omega_i)$-convex for $i=0,1$. 

Consider the union of the sets $L'_i\cup \Lambda_i$ and $B=\bigcup_{j=1}^m \beta_j \cup \gamma_j$,
the latter consisting of pairwise disjoint arcs connecting $L'_i$ to $\Lambda_{i}$.
By Theorem \ref{th:Stolzenberg}, its $\Oscr(\Omega_i)$-convex hull consists of $L_i \cup \Lambda_i$,
which is obtained by filling in the discs $\bigcup_{j=1}^m D_j$, and perhaps some other 
irreducible complex curves in $\Omega_i\setminus(L'_i\cup \Lambda_i \cup B)$
having their ends in $L'_i\cup \Lambda_i \cup B$.
However, the same argument as in the proof of Lemma \ref{lem:hc1} shows that any such curve 
$C$ which is not one of the discs $D_j$ creates a complex curve in the Stein domain
$\Omega_i$ with boundary in the $\Oscr(\Omega_i)$-convex set $L'_i\cup \Lambda_i$, a contradiction.
More precisely, if the closure of $C$ contains a point $p\in \beta_j$, then it contains this entire arc
and $C \cup D_j$ is analytic along $\beta_j$. Since every end of $C$ is connected,
our geometric situation implies that the end containing $\beta_j$ also contains the other arc 
$\gamma_j$ in the boundary of the disc $D_j$,  so $C\cup D_j$ is a complex curve
along these two arcs. The curve $C$ may have other ends of the same type. 
The upshot is that $C$ together with some of the discs $D_j$ closes up to 
a complex curve $\wt C$ having its ends in $L'_i\cup \Lambda_i$. 
Since this set is $\Oscr(\Omega_i)$-convex, we arrived at a contradiction. 

This contradiction shows that the compact set 
$L_i \cup \Lambda_i$ is $\Oscr(\Omega_i)$-convex for $i=0,1$.
Hence, there is a plurisubharmonic function $\tau_i\ge 0$ on $\Omega_i$ 
vanishing precisely on $L_i \cup \Lambda_i$. We claim that the function $\tau=\max\{\tau_0,\tau_1\}\ge 0$ 
is then well-defined (and hence plurisubharmonic) on a neighbourhood of $A\cup K=L_0\cup L_1$, 
and it vanishes precisely on $A\cup K$.
To see this, note that $\tau=\tau_1$ on $\Lambda_0$ since $\tau_0$ vanishes there, 
so the function $\tau_1$ takes over in the maximum before we run out of 
the domain of $\tau_0$, provided that we stay close enough to $A\cup K$. By taking 
$\tau=\tau_1$ in a neighbourhood of $\overline {L_1\setminus L_0}$ we see that 
$\tau$ is well-defined in a neighbourhood of $L_1$.
The symmetric argument shows that $\tau$ is well-defined in a neighbourhood of $L_0$ if we take 
$\tau=\tau_0$ near $\overline {L_0\setminus L_1}$.
Finally, any point $p\in L_0\cap L_1$ has a neighbourhood $U\subset X$ such that
both functions $\tau_0,\tau_1$ are defined on $U$ and at least one of them is positive
on $U\setminus (L_0\cup L_1)=U\setminus (A\cup K)$. (The last condition holds because 
the sets $\Lambda_0$ and $\Lambda_1$ are disjoint.) This proves the claim.

It follows from part 2 of Lemma \ref{lem:reduction} that $L_0\cup L_1$ is holomorphically convex.
\end{proof}

\begin{remark}\label{rem:hc2}
The conclusion of Lemma \ref{lem:hc2} is false in general if the set $L_0\cap L_1=A_0\cap A_1$ contains 
a complex annulus $E$. Indeed, such an annulus may be contained in a bigger annulus 
$E'$ in $A=A_0\cup A_1$ with boundary in $K= K_0\cup K_1$, and hence it may generate 
a nontrivial envelope of holomorphy of $A\cup K$. Note also that in such case the set 
$L_0\cup L_1$ is not admissible since $E'$ lies in the $\Oscr(A)$-convex hull of $A\cap K$.
\end{remark}

%
%
\begin{proof}[Proof of Theorem \ref{th:AKS}]
This follows by first using Lemma \ref{lem:hc1} and then performing a finite inductive application of
Lemma \ref{lem:hc2}. We begin with the special admissible set 
$A_c\cup K$ \eqref{eq:Gammac}, which is holomorphically convex by Lemma \ref{lem:hc1}.
We then successively attach discs with $\Cscr^2$ boundaries in $A\setminus K$ 
such that each attachment satisfies the conditions of Lemma \ref{lem:hc2}, 
reaching $A\cup K$ in finitely many steps. 
Every such disc is holomorphically convex by Lemma \ref{lem:hullA}. 
(In fact, we can choose small discs which may be presented as graphs in local holomorphic coordinates,
so holomorphic convexity is elementary to establish.) Every disc can be chosen such that its 
intersection with the previous set is either empty or it consists of one or two disc. 
The first case is used to add a new connected component, and the last one (two discs)
to change the topology of the set, 
i.e., to increase the genus or reduce the number of connected components. 
The hypothesis that $A\cap K$ is $\Oscr(A)$-convex guarantees that there is 
no need to attach a disc to the previous set along its entire boundary curve. 
A precise geometric description of this procedure can be found in standard sources
on Riemann surfaces, and also in \cite[Section 1.4]{AlarconForstnericLopez2021}.
\end{proof}

%
%
\subsection*{Acknowledgements}
Research was supported in part by the research program P1-0291 and grants J1-9104 and J1-3005 
from ARRS, Republic of Slovenia.




\vspace*{5mm}
\noindent Franc Forstneri\v c

\noindent Faculty of Mathematics and Physics, University of Ljubljana, Jadranska 19, SI--1000 Ljubljana, Slovenia, and 

\noindent 
Institute of Mathematics, Physics and Mechanics, Jadranska 19, SI--1000 Ljubljana, Slovenia

\noindent e-mail: {\tt franc.forstneric@fmf.uni-lj.si}

\end{document}